\newtheorem{theorem}[remark]{Theorem}
\newtheorem{proposition}[remark]{Proposition}
\newtheorem{lemma}[remark]{Lemma}
\newcommand{\D}{\mathscr{D}}
\newcommand{\Prob}{\mathbb{P}}
\newcommand{\T}{\mathbb{T}}
\newcommand{\E}{\mathbb{E}}
\newcommand{\PP}{\mathbb{P}}
\newcommand{\CP}{\mathcal{P}}
\newcommand{\CC}{\mathcal{C}}
\newcommand{\CB}{\mathcal{B}}
\newcommand{\CO}{\mathcal{O}}
\newcommand{\CL}{\mathcal{L}}
\newcommand{\R}{\mathbb{R}}
\newcommand{\Z}{\mathbb{Z}}
\newcommand{\eps}{\varepsilon}
\newcommand{\F}{\mathscr{F}}
\newcommand{\I}{{\mathscr I}}
\newcommand{\one}{\mathbf1}
\newcommand{\TV}{\mathrm{TV}}
\newcommand{\esssup}{\mathop{\operatorname{ess}\operatorname{sup}}}
\begin{document}
\begin{frontmatter}

\title{Periodic homogenization with an interface: The multi-dimensional case}

\runtitle{Periodic homogenization with an interface}

\begin{aug}
\author[A]{\fnms{Martin} \snm{Hairer}\corref{}\ead[label=e1]{mhairer@cims.nyu.edu}%
\ead[label=e3]{M.Hairer@Warwick.ac.uk}\thanksref{aut1}}
and
\author[B]{\fnms{Charles} \snm{Manson}\ead[label=e2]{charliemanson1982@hotmail.co.uk}}

\thankstext{aut1}{Supported by EPSRC Advanced Research Fellowship Grant EP/D071593/1.}

\runauthor{M. Hairer and C. Manson}

\affiliation{New York University and University of Warwick}

\address[A]{Courant Institute\\
of Mathematical Sciences\\
New York University\\
251 Mercer St.\\
New York, New York 10012\\
USA\\
\printead{e1}\\
\phantom{E-mail: }\printead*{e3}}

\address[B]{Mathematics Institute\\
University of Warwick\\
Coventry CV4 7AL\\
United Kingdom\\
\printead{e2}} 
\end{aug}

\received{\smonth{9} \syear{2009}}
\revised{\smonth{4} \syear{2010}}

%
\begin{abstract}
We consider a diffusion process with coefficients that are periodic
outside of an ``interface region'' of finite thickness. The question
investigated in this article is the limiting long time/large scale
behavior of such a process under diffusive rescaling. It is clear that
outside of the interface, the limiting process must behave like
Brownian motion, with diffusion matrices given by the standard theory
of homogenization. The interesting behavior therefore occurs on the
interface. Our main result is that the limiting process is a
semimartingale whose bounded variation part is proportional to the
local time spent on the interface. The proportionality vector can have
nonzero components parallel to the interface, so that the limiting
diffusion is not necessarily reversible. We also exhibit an explicit
way of identifying its parameters in terms of the coefficients of the
original diffusion.

Similarly to the one-dimensional case, our method of proof relies on
the framework provided by Freidlin and Wentzell [\textit{Ann. Probab.}
\textbf{21} (1993) 2215--2245] for diffusion processes on a graph in
order to identify the generator of the limiting process.
\end{abstract}

%
\begin{keyword}[class=AMS]
\kwd{60H10}
\kwd{60J60}.
\end{keyword}
\begin{keyword}
\kwd{Periodic homogenization}
\kwd{interface}
\kwd{skew Brownian motion}
\kwd{local time}.
\end{keyword}

\end{frontmatter}

\section{Introduction}\label{sec1}

The theory of periodic homogenization is by now extremely well
understood; see, for example, the monographs \cite{lions,MR2382139}.
Recall that
the most basic result states that if $X$ is a diffusion with smooth
periodic coefficients, then the diffusively rescaled process $X^\eps
(t) = \eps X(t/\eps^2)$ converges
in law to a Brownian motion with an explicitly computable diffusion matrix.
If one considers diffusions that are ``locally periodic,'' but with slow
modulations over spatial scales of order~$\eps^{-1}$,
then it was shown in \cite{MR2126985} that the rescaled process
converges in general to some diffusion process with
a computable expression for both its drift and diffusion coefficients.

In this article, we will also consider the ``locally periodic''
situation, but instead of considering slow modulations
of the coefficients, we consider the case of a sharp [i.e., of size
$\CO(1)$] transition between two periodic structures.
In the (much simpler) one-dimensional case, this model was previously
studied in \cite{1}, where we showed that the
rescaled process converges in law to skew Brownian motion with an
explicit expression for the skewness parameter.
In higher dimensions, this model has not yet been studied to the best
of our knowledge. The aim of this article is to
clarify what is the behavior of $X^\eps$ near the interface for very
small values of $\eps$.
It is important to remark at this stage that we do \textit{not} make
the assumption that our diffusion is reversible. As we will see in
Section \ref{sec:mainresult}, there are then situations
in which the limiting process is not reversible either, contrary to the
one-dimensional situation.

One feature of the problem at hand is that
there is no \textit{finite} invariant measure built into the framework
of the problem.
This is unlike most other homogenization problems, even those
exhibiting rather ``bad'' ergodic properties,
such as the random environment case \cite{MR712714,Olla} or the
quenched convergence results for the
Bouchaud trap model \cite{MR2353391}.
Since in our case the invariant measure $\mu$ of $X$ is only $\sigma$-finite,
this leads to two problems when trying to compute the effect of the
behavior of $X$ near the interface
in the limit $\eps\to0$. Indeed, one would ``na{\"\i}vely'' expect
that an effective drift along the interface can be described
by the quantity
%
%
\begin{equation}\label{e:intb}
\int b(x) \mu(dx) .
\end{equation}
One problem with this expression is that there is no obvious natural
normalization for $\mu$. Furthermore, since $b$
is periodic away from the interface and the same is (approximately)
true for $\mu$, this integral certainly does not converge,
even if we consider it as an integral over $\R\times\T^{d-1}$ by
making use of the periodic structure in the directions parallel
to the interface. See however (\ref{e:valuealpha}) and
Proposition \ref
{prop:convalphap} below for the correct way of
interpreting (\ref{e:intb}) and our main result, Theorem \ref
{maintheorem} below, on how this quantity appears
in the construction of the limiting process.

Another common feature of many homogenization results is the usage
of a globally defined corrector function to compensate for the singular
terms appearing in the problem.
This is of course the case for standard periodic homogenization \cite
{lions}, but also for a number of
stochastic homogenization problems, as, for example, in \cite
{MR2031772,MR712714,Olla,MR2475672}.
For the present problem however, it will be convenient to make use of
corrector function that
only cancels the singular terms away from the interface and to treat
the behavior of the limiting
process at the interface by completely different means.

One very recent homogenization result where discontinuous coefficients
appear in the limiting equation
can be found in \cite{MR2480550} (which in turn generalizes \cite
{MR1828773}). However, their framework
is quite different to the one considered here and does not seem to encompass
our problem. Much more closely related problems are homogenization
problems with the presence of a
boundary \cite{MR1696289,Nader}. Those have been mostly studied by
analytical tools so far. In our probabilistic language,
what comes closest to the boundary layers studied in these articles is
the $\sigma$-finite invariant measure of $X$,
which is shown in Proposition \ref{invariantconv} below to converge
exponentially fast to a measure with periodic densities away from
the interface.

For simplicity, we will consider the case of a constant diffusion
matrix, but it is straightforward to adapt the proofs
to cover the case of nonconstant diffusivity as well.
More precisely, we consider the family of processes $X^{\eps}$ taking
values in $\R^{d}$, solutions to the stochastic differential equations
%
%
\begin{equation}\label{firstequation}
dX^{\eps} = \frac{1}{\eps} b \biggl(\frac{X^{\eps}}{\eps}
\biggr)\, ds + dB(s) , \qquad X^\eps(0) = x ,
\end{equation}
where $B$ is a $d$-dimensional standard Wiener process. The drift $b$
is assumed to be smooth and such that $b(x + e_i) = b(x)$ for the unit vectors
$e_i$ with $i=2,\ldots,d$ (but not for $i = 1$). Furthermore, we
assume that there exist smooth vector fields $b_\pm$ with unit period
in \textit{every} direction
and $\eta> 0$ such that
\[
b(x) = b_+(x) , \qquad x_1 > \eta,\qquad b(x) = b_-(x) , \qquad x_1 <
-\eta.
\]
Figure \ref{fig:example} is a typical illustration of the type of
vector fields that we have in mind.

%
%
\begin{figure}

\includegraphics{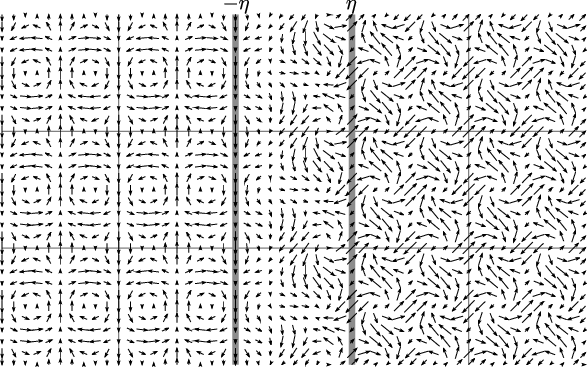}

\caption{Example of a vector field $b$ satisfying our
conditions.}\label{fig:example}
\end{figure}

If we denote by $X$ the same process, but with $\eps= 1$, then the
process $X^\eps$ given by
(\ref{firstequation}) is equal in law to the diffusive rescaling of $X$
by a factor $\frac{1}{\eps}$. In the sequel, we denote the generator
of $X$ by $\CL$
and the generator of $X^\eps$ by $\CL_\eps$. We furthermore denote
by $\CL_\pm$ the generators for the diffusion processes on the torus given
by
%
%
\begin{equation}\label{e:defXpm}
dX^{\pm} = b_\pm(X^{\pm}) \,ds + dB(s) ,
\end{equation}
and by $\mu_\pm$ the corresponding invariant probability measures.
With this notation at hand, we impose the centering condition
$\int_{\T^d} b_\pm(x) \mu_\pm(x) = 0$.

Under these conditions, our main result formulated in Theorem \ref
{maintheorem} below states that the family $X^\eps$ converges in law to
a limiting process $\bar X$. Furthermore, we give an explicit
characterization of $\bar X$, both as the unique solution of a
martingale problem
with some explicitly given generator and as the solution of a
stochastic differential equation involving a local time term on
the interface $\{x_1 = 0\}$. In addition to the homogenized diffusion
coefficients on either side of the interface, this limiting process is
characterized by a ``transmissivity coefficient,'' as well as by a ``drift
vector'' pointing along the interface.

The remainder of this article is structured as follows. After
formulating our main results in Section \ref{sec:mainresult}, we show
tightness of the family
in Section \ref{sec:tight}. In Section~\ref{sec:maintool}, we then
formulate the main tool used in the identification of the limiting
process, namely
a multidimensional analogue of the tool used by Freidlin and Wentzell
in \cite{MR1245308} to study homogenization problems where the
limiting process takes values in a graph. Section \ref{sec:ppm} is
then devoted to the computation of the transmissivity coefficient,
whereas Section \ref{sec:drift} contains the computation of the drift vector.
Finally, we show in Section \ref{sec:wellposed} that the martingale
problem is well-posed and
we identify its solution with the solution to a stochastic differential
equation.

\subsection{Notation}

We define the ``interface'' of width $K$ by
\[
\I_{K} = \{x\in\R^d \dvtx x_1 \in[-K,K]\} .
\]
We also denote by $\partial\I_K$ its boundary.

Frequently throughout the paper we will construct successive escape
and subsequent reentry times particularly when constructing invariant
measures in terms of the invariant measure of an embedded Markov chain
as in \cite{MR0133871}. We will denote such pairs of stopping times as
$\sigma$, $\phi$, which denote escape and reentry times, respectively.
Other stopping times not part of such a sequence will be denoted
by~$\tau$.

\section{The main result}
\label{sec:mainresult}

Before stating the main result, we will first define the various
quantities involved and their relevance.
It is clear that, in view of standard results from periodic
homogenization \cite{lions,MR2382139}, any limiting process
for $X^\eps$ should behave like Brownian motion on either side of the
interface $\I_0 = \{x_1 = 0\}$, with effective diffusion tensors given by
\[
D^\pm_{ij} = \int_{\mathbb{T}^{d}}(\delta_{ik}+\partial_k g_i^\pm
)(\delta_{kj}+\partial_k g_j^\pm)\, d\mu_{\pm} .
\]
(Summation of $k$ is implied.)
Here, the corrector functions $g_\pm\dvtx\T^d \to\R^d$ are the
unique solutions to
$\CL_\pm g_\pm= - b_\pm$ such that
\[
\int_{\T^d} g_\pm(x) \mu_\pm(dx) = 0 .
\]
Since $b_\pm$ are centered with respect to $\mu_\pm$, such functions
do indeed exist.

This justifies the introduction of a differential operator $\bar\CL$
on $\R^d$ defined in two parts by $\bar\CL_{+}$ on $I_+ = \{x_{1}>0\}$
and $\bar\CL_{-}$ on $I_- = \{x_{1}<0\}$ with
%
%
\begin{equation}\label{e:defLpm}
\bar\CL_{\pm} = {D_{ij}^\pm\over2} \partial_i\, \partial_j ,
\end{equation}
then one would expect any limiting process to solve a martingale
problem associated to $\bar\CL$. However, the above definition of
$\bar\CL$ is not complete, since
we did not specify any boundary condition at the interface $\I_0$.

One of the main ingredients in the analysis of the behavior of the
limiting process at the interface is the invariant measure
$\mu$ for the (original, not rescaled) process $X$. It is not clear
a priori that such an
invariant measure exists, since $X$
is not expected to be recurrent in general. However, if we identify
points that differ by integer multiples of $e_j$ for $j = 2,\ldots,d$,
we can interpret $X$ as a process with state space $\R\times\T
^{d-1}$. It then follows from the results in \cite{MR0133871} that this
process admits a $\sigma$-finite invariant measure $\mu$ on $\R
\times\T^{d-1}$.

Note that the invariant measure $\mu$ is \textit{not} finite and can
therefore not be normalized in a canonical way. However, if we define the
``unit cells'' $C_j^\pm$ by
\[
C_j^+ = [j, j + 1] \times\T^{d-1} ,\qquad
C_j^- = [- j - 1, - j] \times\T^{d-1} ,
\]
then it is possible to make sense of the quantity $q_\pm= \lim_{j \to
\infty} \mu(C_j^\pm)$ (we will show in Proposition \ref{invariantconv}
below that this limit actually exists).

Let now $p_{\pm}$ be given by
\[
p_{\pm}=\frac{q_\pm D^{\pm}_{11}}{q_+ D_{11}^{+}+ q_- D_{11}^{-}} ,
\]
which can also we rewritten in a more suggestive way as
%
%
\begin{equation}\label{whichsideitescapes}
{p_+ \over p_-} = {q_+ D_{11}^+ \over q_- D_{11}^-} .
\end{equation}
This is the homogenized diffusion coefficient in the direction
perpendicular to the interface,
weighted by the invariant measure of a unit cell. Comparing with the
one-dimensional case \cite{1}, one would expect this
to yield the likelihood for $X^\eps$ to exit a small (but still much
larger than $\eps$) neighborhood of the interface
on a specific side.
\begin{remark}
The ratio
%
%
\begin{equation}\label{timeonside}
\frac{p_{+}\sqrt{D_{11}^-}}{p_{-}\sqrt{D_{11}^+} + p_{+}\sqrt
{D_{11}^-}}
\end{equation}
gives the asymptotic probability of the process being located in the
rhs ($+$) of the interface after a long time.
This follows from the weak convergence of the first component to a skew
Brownian motion with (possibly) different diffusion coefficients on
either side of the interface.\vspace*{1pt} If we rescale this skew BM
on either side
of the interface by $\sqrt{D_{11}^{\pm}}$ to obtain a standard
skew BM, we can use the scale function of BM to finish the verification
of (\ref{timeonside}).
\end{remark}

However, unlike in the one-dimensional case, these quantities are not
yet sufficient to characterize the limiting process.
The reason is that since $X^\eps$ is expected to spend time
proportional to $\eps$ in the interface, but the drift
is of order $\eps^{-1}$ there, it is not impossible that the limiting
process picks up a nontrivial drift along the interface.
It turns out that this drift can be described by the coefficients
$\alpha_{j}$ given by
%
%
\begin{equation}\label{e:valuealpha}
\alpha_j = 2 \biggl({p_+ \over D_{11}^+} + {p_- \over D_{11}^-} \biggr)
\int_{\R\times\T^{d-1}} \bigl(b_j(x) + \CL g_j(x)\bigr) \mu(dx) ,
\end{equation}
where $\mu$ is again normalized in such a way that $q_+ + q_- = 1$ and
where $g$ is any smooth function agreeing with $g_\pm$
on either side of the interface (see Section \ref{sec:tight}).
\begin{remark}\label{rem:renorm}
Since $\int_{\R\times\T^{d-1}} \CL\phi(x) \mu(dx) = 0$ for
every smooth \textit{compactly supported} function $\phi$,
one should interpret the integral on the right-hand side of (\ref
{e:valuealpha}) as a ``renormalized'' form of the
intuitive more meaningful quantity (\ref{e:intb}).
\end{remark}
\begin{remark}
The expression (\ref{e:valuealpha}) is useful in order to generate
examples with nonvanishing values
for the coefficients $\alpha_i$.
\end{remark}

Given all of these ingredients, we can construct an operator $\bar\CL
$ as follows. The domain
$\D(\bar\CL)$ of $\bar\CL$ consists of functions $f \dvtx\R^d
\to\R$ such that:
\begin{itemize}
\item The restrictions of $f$ to $I_+$, $I_-$ and $\I_0$ are smooth.
\item The partial derivatives $\partial_i f$ are continuous for $i
\ge2$.
\item The partial derivative $\partial_1 f(x)$ has right and left
limits $\partial_{1} f|_{I_{\pm}}$ as $x \to\I_0$ and these limits
satisfy the gluing condition
%
%
\begin{equation} \label{derivative}
p_{+}\,\partial_{1} f|_{I_{+}}-p_{-}\partial_{1} f|_{I_{-}}+\sum
_{j=2}^{d}\alpha_{j}\,\partial_{j} f=0 .
\end{equation}
\end{itemize}
For any $f \in\D(\bar\CL)$, we then set $\bar\CL f(x) = \CL_\pm
f(x)$ for $x \in I_\pm$.
With these definitions at hand, we can state the main result of the article.
\begin{theorem}\label{maintheorem}
The family of processes $X^{\eps}$ converges in law to the unique
solution $\bar X$
to the martingale problem given by the operator $\bar{\CL}$.
Furthermore, there exist matrices
$M_\pm$ and a vector $K \in\R^d$ such that this solution solves the SDE
%
%
\begin{equation}\label{e:limit}
d\bar X(t) = \one_{\bar X_1 \le0} M_- \,dW(t) + \one_{\bar X_1 > 0}
M_+ \,dW(t) + K \,dL(t) .
\end{equation}
where $L$ denotes the symmetric local time of $\bar X_1$ at the origin
and $W$ is a standard $d$-dimensional
Wiener process. The matrices $M_\pm$ and the vector $K$ satisfy
\[
M_\pm M_\pm^T = D^\pm,\qquad K_1 = p_+ - p_- ,\qquad K_j = \alpha
_j ,
\]
for $j =\{2,\ldots,d\}$.
\end{theorem}

In Figure \ref{fig:simulation}, we show an example of a numerical
simulation of the process studied in this article.
The figure on the left shows the small-scale structure (the periodic
structure of the drift is drawn as a grid). One can
clearly see the periodic structure of the sample path, especially to
the left of the interface. One can also see that the effective
diffusivity is not necessarily proportional to the identity. In this
case, to the left of the interface,
the process diffuses much more easily horizontally
than vertically.

%
%
\begin{figure}

\includegraphics{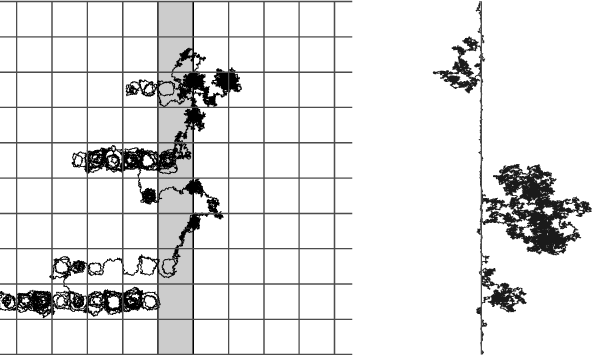}

\caption{Sample paths at small (left) and large (right) scales.}
\label{fig:simulation}
\end{figure}

The picture to the right
shows a simulation of the process at a much larger scale. We used a
slightly different vector field for the drift in order
to obtain a simulation that shows clearly the strong drift experienced
by the process when it hits the interface.
\begin{remark}
Since the quadratic variation of $\bar X$ has a discontinuity at \mbox
{$\bar
X_1 = 0$}, we do have to specify
which kind of local time $L$ is. Using the symmetric local time yields
nicer expressions.
See, for example, \cite{MR1083357,MR2280299} for a definition of the
symmetric local time.
\end{remark}

Analyzing what this means for a simple example, we consider the case of
a two-dimensional problem where we have $b_{1}=0$ and $b_{2}=f(x_{1})$
for $f$ a smooth function that is zero outside of $\I_\eta$. Clearly,
$p_{\pm}=\frac{1}{2}$. In this case, the invariant measure $\mu$ of
the process $X$ is given by ${1\over2}$ times
Lebesgue measure on $\R\times S^1$ and we can choose $g=0$. This
implies that we then simply have
\[
\alpha_{2}=\int_{\mathbb{R}}f(x) \,dx ,
\]
as one would expect.

\section{Tightness of the family}
\label{sec:tight}

The aim of this section is to prove the following tightness result.
\begin{theorem}\label{theo:tight}
Denote by $\Prob^\eps$ the law of $X^\eps_x$ on $\CC(\R_+,\R^d)$.
Then the family $\{\Prob^\eps\}_{\eps\in(0,1]}$ is tight.
\end{theorem}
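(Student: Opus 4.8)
The plan is to establish tightness in $\CC(\R_+,\R^d)$ via the standard Aldous--Kolmogorov criterion, which reduces to a uniform fourth-moment bound of the form $\E|X^\eps(t) - X^\eps(s)|^4 \le C(t-s)^2$ for $|t-s| \le 1$, together with control of the initial condition (which is trivial here since $X^\eps(0) = x$ is fixed). Writing $X^\eps(t) - X^\eps(s) = \frac{1}{\eps}\int_s^t b(X^\eps(r)/\eps)\,dr + (B(t) - B(s))$, the Brownian part contributes the required bound immediately, so the whole difficulty is concentrated in controlling the drift term $\frac{1}{\eps}\int_s^t b(X^\eps(r)/\eps)\,dr$, which is individually of order $\eps^{-1}$ and only becomes $\CO(1)$ after the oscillations cancel.

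The key step is to remove the singular drift by an It\^o expansion using a corrector. Let $g$ be a smooth bounded function on $\R \times \T^{d-1}$ agreeing with $g_\pm$ outside $\I_\eta$, so that $\CL g = -b$ holds outside the interface, and set $g^\eps(x) = \eps\, g(x/\eps)$. Applying It\^o's formula to $g^\eps(X^\eps)$ gives $\frac{1}{\eps}\int_s^t (\CL g)(X^\eps(r)/\eps)\,dr = g^\eps(X^\eps(t)) - g^\eps(X^\eps(s)) - \int_s^t (\nabla g)(X^\eps(r)/\eps)\,dB(r)$. Since $g$ is bounded, $g^\eps$ is uniformly $\CO(\eps)$ and contributes negligibly; since $\nabla g$ is bounded, the stochastic integral again obeys the Burkholder--Davis--Gundy fourth-moment bound $C(t-s)^2$. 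This identity handles the drift \emph{outside} the interface; the leftover term is $\frac{1}{\eps}\int_s^t \big(b + \CL g\big)(X^\eps(r)/\eps)\, \one_{|X^\eps_1(r)| \le \eps\eta}\,dr$, supported on the time the rescaled process spends in $\I_{\eps\eta}$. Here $b + \CL g$ is bounded, so this term is bounded by $\frac{C}{\eps}\,\big(\text{Lebesgue time spent by } X^\eps \text{ in } \I_{\eps\eta} \text{ during } [s,t]\big)$. The crux is therefore an occupation-time estimate: the expected time that $X^\eps$ spends within distance $\eps\eta$ of the hyperplane $\{x_1 = 0\}$ during an interval of length $(t-s)$ is $\CO(\eps(t-s)) + \CO(\eps^2)$ (with higher moments controlled analogously), so that after multiplication by $\eps^{-1}$ it is genuinely $\CO(1)$ and has the right scaling in $(t-s)$ to close the fourth-moment estimate.

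The main obstacle is thus this occupation-time bound near the interface, and I expect it to be the only nontrivial part of the argument. The natural route is to compare $X^\eps_1$ (the first coordinate) with a one-dimensional diffusion: it solves $dX^\eps_1 = \frac{1}{\eps} b_1(X^\eps/\eps)\,dr + dB_1$, and one can apply the one-dimensional occupation-time / Krylov-type estimate to the process $Y = X^\eps_1/\eps$ run at speed $\eps^{-2}$, which spends time of order (length scale $\eta$)/(speed) near a point. Concretely, using the Tanaka formula for $|X^\eps_1|$ together with boundedness of $b_1$, the local time of $X^\eps_1$ at any level accumulated over $[s,t]$ is controlled by $C(t-s)^{1/2}$ in an appropriate moment sense, and the occupation time of the slab $\I_{\eps\eta}$ is $\eps\eta$ times a local-time-type quantity, giving the claimed $\CO(\eps(t-s)^{1/2})$-type bound — which, when raised to the fourth power after multiplying by $\eps^{-1}$, produces $(t-s)^2$. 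Assembling the three pieces (Brownian increment, corrector boundary terms and stochastic integral, interface occupation term) via $\E|X^\eps(t)-X^\eps(s)|^4 \lesssim \sum_i \E|\text{piece}_i|^4$ then yields the uniform modulus-of-continuity bound and hence tightness.
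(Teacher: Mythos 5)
Your high-level structure is right and mirrors the paper's: subtract the corrector, observe that the remaining drift $\tilde b = b + \CL g$ is supported in $\I_{\tilde\eta\eps}$ and bounded, and reduce everything to an occupation-time estimate for the slab $\I_{\tilde\eta\eps}$. But the two arguments diverge at both the tightness criterion and the occupation-time estimate, and your occupation-time argument as written has a gap. The paper replaces Kolmogorov's moment criterion with the escape-time criterion of Stroock--Varadhan (Proposition~\ref{Theorem 1}), which only requires showing $\Prob_x\bigl(\sup_{t<\delta}|Y^\eps(t)-Y^\eps(0)|>\rho\bigr)\to 0$, and hence only a \emph{first-moment} bound $\sup_x\E_x\int_0^\delta \one_{\I_{\tilde\eta\eps}}(X^\eps_x(s))\,ds = \CO(\eps\sqrt\delta)$; it obtains this by decomposing the trajectory into excursions between $\d\I_{K\eps}$ and $\I_{\hat K\eps}$, bounding the expected duration of each return by $\CO(\eps^2)$ via comparison with a ``worst-case'' one-dimensional diffusion (Lemma~\ref{lem:boundwc}), and showing that each excursion away has probability $\gtrsim \eps/\sqrt\delta$ of lasting longer than $\delta$, so that the number of returns is geometric with mean $\CO(\sqrt\delta/\eps)$. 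Your route requires \emph{fourth moments} of the same occupation time (you need $\E T_\eps^4 \lesssim \eps^4\delta^2$, not just $\E T_\eps\lesssim\eps\sqrt\delta$), which is strictly harder and is the part you waved at with ``higher moments controlled analogously''.

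The concrete gap is in the Tanaka step. You claim that ``using the Tanaka formula for $|X^\eps_1|$ together with boundedness of $b_1$'' controls the local time by $C(t-s)^{1/2}$. But the drift of $X^\eps_1$ is $\eps^{-1}b_1(X^\eps/\eps)$, not $b_1(X^\eps/\eps)$; boundedness of $b_1$ gives the drift magnitude $\CO(\eps^{-1})$, so the finite-variation term in Tanaka is $\frac{1}{\eps}\int_s^t\mathrm{sgn}(X^\eps_1)b_1(X^\eps/\eps)\,dr$, which is a priori $\CO(\delta/\eps)$ and does \emph{not} yield $L^a\lesssim\sqrt\delta$. If you instead apply Tanaka to the corrected coordinate $Y^\eps_1$, the drift $\tilde b_1$ is supported in the slab, and the finite-variation term becomes $\frac{1}{\eps}\int_s^t\mathrm{sgn}(Y^\eps_1)\tilde b_1(X^\eps/\eps)\,dr$, whose absolute value is bounded by $\frac{C}{\eps}T_\eps$ --- precisely the quantity you are trying to estimate. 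Since also $T_\eps\lesssim\eps\sup_a L^a$, you get $T_\eps\lesssim\eps+\eps|\text{martingale}|+C\,T_\eps$ with no smallness in $C$, so the argument is circular. The paper's excursion decomposition and comparison with the explicit worst-case process $V^\eps$ is exactly what breaks this circularity; if you want to stay with the Tanaka/local-time route, you would need a genuinely different idea to decouple the drift contribution, for example a Krylov-type estimate or the one-dimensional comparison the paper uses, at which point you have essentially reconstructed the paper's argument anyway.
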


Similar to what happens in the classical theory of periodic
homogenization, it will be very convenient to construct a ``corrected
process''
$Y$, obtained by adding to $X$ a corrector function that cancels out to
first order the effect of the small oscillations.
To this aim, we introduce a smooth function $g\dvtx\R^d \to\R^d$
which is periodic in the directions
$2,\ldots,d$ and such that $g(x) = g_+(x)$ for $x_1 \ge\eta$ and
similarly for $x_1 \le-\eta$. (Recall that $g_\pm$ was defined
in Section \ref{sec:mainresult}.)
We do not specify the behavior of $g$ inside the interface $\I_\eta$,
except that it has to be smooth in the whole space and
periodic in the directions parallel to the interface. We fix such a
function $g$ once and for all from now on.
We furthermore denote by $Y^{\eps}$ the process defined by $Y^{\eps}
= X^{\eps} + \eps g(\eps^{-1}X^{\eps})$, as well as
$y = x + \eps g(x/\eps)$ for its initial condition.

Defining the corrected drift $\tilde b(x) = (\CL g + b)(x)$
and the corrected diffusion coefficient
$\tilde\sigma_{ij}(x) = \delta_{ij} + \partial_j g_i(x)$, it
follows from It\^o's formula that the $i$th component of
$Y^\eps_y$ satisfies
%
%
\begin{equation}\label{e:defY}
(Y^\eps_y)_i(t) = y_i + \int_{0}^{t} \frac{1}{\eps}
\tilde b_i \biggl({1\over\eps}X^{\eps}_{x}(s) \biggr) \,ds + \int
_{0}^{t}\tilde\sigma_{ij} \biggl({1\over\eps}X^{\eps}_{x}(s)
\biggr) \,dW_{j}(s) .
\end{equation}
It is very important to note that the corrected drift $\tilde b$
\textit{vanishes}
outside of $\I_\eta$, so that the process $Y$ is subject to a large
drift only when $X$ is inside the interface.

Our main tool in the proof of Theorem \ref{theo:tight} is the
following result, which is very similar to
\cite{MR532498}, Theorem 1.4.6.
\begin{proposition}\label{Theorem 1}
Let $\mathscr{P}$ be a family of probability measures on $\Omega= \CC
(\R_+,\R^d)$ and denote by $x$ the canonical process on $\Omega$.
Assume that
\[
\lim_{R\nearrow\infty}\sup_{\Prob\in\mathscr{P}}\Prob\bigl(
|x(0)|\geq R \bigr) = 0 .
\]
Furthermore, for any given $\rho>0$, let $\tau_{0}=0$, and define
recursively $\tau_{i+1} = \inf_{t>\tau_{i}}|x(t)-x(\tau_{i})|>\rho
$. Assume that the limit
%
%
\begin{equation}\label{lemma 1.4.4'}
\lim_{\delta\rightarrow0} \esssup\Prob[\tau_{n+1}-\tau_{n}
\leq
\delta|\mathscr{F}_{\tau_{n}}] \rightarrow0,\qquad\mbox{$\Prob$ a.s.,
on $\{\tau_{n}<\infty\}$}
\end{equation}
holds uniformly for every $\Prob\in\mathscr P$ and every $n\ge0$.
Then the family of probability measures $\mathscr{P}$ is tight on
$\Omega$.
\end{proposition}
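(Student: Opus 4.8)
The plan is to reduce the tightness of $\mathscr P$ to the Aldous-type criterion via the sequence of stopping times $\tau_i$, following the strategy of \cite[Thm~1.4.6]{MR532498}. First I would recall the standard characterization of tightness on $\CC(\R_+,\R^d)$: the family is tight if and only if the initial distributions are tight (which is exactly the first hypothesis) and, for each fixed $T>0$ and each $\eta>0$,
\begin{equ}
\lim_{\delta\to 0}\sup_{\Prob\in\mathscr P}\Prob\Bigl(w_{x}([0,T],\delta)\geq\eta\Bigr)=0\;,
\end{equ}
where $w_x([0,T],\delta)=\sup\{|x(s)-x(t)|:s,t\in[0,T],\,|s-t|\leq\delta\}$ is the modulus of continuity. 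So the task is to control this modulus uniformly over the family using only the information encoded in \eref{lemma 1.4.4'}.

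The key observation is that the increments of $x$ between consecutive times $\tau_i$ are controlled by construction: by continuity of paths, $|x(\tau_{i+1})-x(\tau_i)|=\rho$ on $\{\tau_{i+1}<\infty\}$, and for any $t\in[\tau_i,\tau_{i+1})$ we have $|x(t)-x(\tau_i)|\leq\rho$. Hence if an interval $[s,t]$ with $t-s\leq\delta$ contains the path-values over at most, say, two consecutive "cells" $[\tau_i,\tau_{i+2}]$, then $|x(s)-x(t)|\leq 4\rho$. Thus a large oscillation on a short interval forces at least three of the $\tau_i$'s to fall within a window of length $\delta$, i.e. $\tau_{i+1}-\tau_i\leq\delta$ and $\tau_{i+2}-\tau_{i+1}\leq\delta$ for some $i$. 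The strategy is therefore: given target oscillation $\eta$, choose $\rho=\eta/8$ (say); then
\begin{equ}
\Prob\bigl(w_x([0,T],\delta)\geq\eta\bigr)\leq \Prob\Bigl(\exists\, i \text{ with }\tau_i\leq T,\ \tau_{i+2}-\tau_i\leq 2\delta\Bigr) + \Prob\bigl(N_T > m\bigr)\;,
\end{equ}
where $N_T=\#\{i:\tau_i\leq T\}$ and $m$ is a truncation level to be chosen; on the event $\{N_T\le m\}$ the first probability is a union over at most $m$ indices. For each fixed $i$, I would condition on $\mathscr F_{\tau_i}$ and use \eref{lemma 1.4.4'} twice (once for $\tau_{i+1}-\tau_i$ given $\mathscr F_{\tau_i}$, and once for $\tau_{i+2}-\tau_{i+1}$ given $\mathscr F_{\tau_{i+1}}$, then take conditional expectation) to bound $\Prob(\tau_{i+2}-\tau_i\leq 2\delta\,|\,\mathscr F_{\tau_i})$ by a quantity $\psi(\delta)\to 0$ as $\delta\to 0$, uniformly in $i$ and in $\Prob\in\mathscr P$. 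Summing over $i\le m$ gives a bound $m\,\psi(2\delta)$ for the first term.

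It remains to control $\Prob(N_T>m)$ uniformly, and this is where the only real work lies. The point is that \eref{lemma 1.4.4'}, together with a lower bound of the form: there exist $\delta_0>0$ and $c<1$ with $\esssup\Prob[\tau_{n+1}-\tau_n\leq\delta_0\,|\,\mathscr F_{\tau_n}]\leq c$ for all $n$ and all $\Prob\in\mathscr P$ (which is an immediate consequence of the stated limit), forces the interarrival times to be "not too small too often". Concretely, I would show that $\Prob(\tau_{n+1}-\tau_n>\delta_0\,|\,\mathscr F_{\tau_n})\geq 1-c$ a.s., so that $N_T$ is stochastically dominated by $1$ plus a negative-binomial-type count: the number of $\tau$-steps before the accumulated "successes" (steps longer than $\delta_0$) exceed $\lceil T/\delta_0\rceil$. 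A Chernoff/Markov bound on this dominating variable then yields $\sup_{\Prob\in\mathscr P}\Prob(N_T>m)\to 0$ as $m\to\infty$. Given $\eta$, one first picks $m$ large to make this term $\le\eta'/2$, then picks $\delta$ small so that $m\,\psi(2\delta)\le\eta'/2$. The main obstacle is thus the uniform-in-$\mathscr P$ estimate on $N_T$: making precise the stochastic domination argument from the conditional bound \eref{lemma 1.4.4'} and verifying that all constants depend only on $T$, $\rho$, $\delta_0$ and $c$, not on the particular measure. Once that is in hand, assembling the pieces and invoking the modulus-of-continuity tightness criterion completes the proof.
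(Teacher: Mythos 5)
Your proposal is correct, and its skeleton is the same as the paper's: control the oscillation through the exit times $\tau_i$, bound the probability that some gap among the first $m$ is small by summing the conditional probabilities from the hypothesis, and then control the number of $\tau_i$'s in $[0,T]$ uniformly over $\mathscr P$. The differences are in how the two halves are discharged. The paper does not re-derive the modulus-of-continuity reduction: it simply plugs the hypothesis into the proof of \cite[Thm~1.4.6]{MR532498}, writing $\Prob(\delta_\rho\le\delta)\le\sum_{i=1}^k\E\bigl[\Prob[\tau_{i+1}-\tau_i\le\delta\,|\,\F_{\tau_i}]\bigr]+\Prob(N_\rho>k)$, whereas you reconstruct that reduction explicitly (your bookkeeping with two consecutive small gaps and $\rho=\eta/8$ is a harmless variant; a single small gap would already do). For the tail of the count, the paper converts the hypothesis into the bound $\E\bigl[e^{-(\tau_{i+1}-\tau_i)}\,|\,\F_{\tau_i}\bigr]\le\lambda<1$ (by splitting at a small $t_0$) and invokes \cite[Lemma~1.4.5]{MR532498}, which gives $\Prob(N_\rho>k)\lesssim e^{T}\lambda^{k}$; you instead extract $\delta_0$ and $c<1$ from the hypothesis, dominate the number of gaps shorter than $\delta_0$ among the first $m$ by a binomial count, and use Chernoff. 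These are equivalent ways of turning the uniform conditional lower bound on gap lengths into a uniform geometric bound on $N_T$; your version is self-contained and elementary, the paper's is shorter because it leans on the Stroock--Varadhan machinery. Your stochastic-domination step does need the standard coupling argument (the success indicators are not independent, only conditionally bounded below), but as you note this is routine, and all constants depend only on $T$, $\rho$, $\delta_0$, $c$, so uniformity over $\mathscr P$ holds as required.
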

\begin{pf}
The proof is similar to that of Theorem 1.4.6 in \cite{MR532498},
except that their Lemma 1.4.4 is replaced by (\ref{lemma 1.4.4'}).

Fix an arbitrary final time $T>0$. Furthermore, denote for $\omega\in
\Omega$
\[
N_\rho= N_\rho(\omega) = \min\{n\dvtx\tau_{n+1}>T\},
\]
and the modulus of continuity by $\delta_{\rho}$,
\[
\delta_{\rho} = \delta_{\rho}(\omega) = \min\{\tau_{n} - \tau
_{n-1}\dvtx1\leq n \leq N_\rho(\omega)\} .
\]
Note that this expression depends on $\rho$ via the definition of the
stopping times $\tau_i$.

With this notation at hand, tightness follows as in \cite{MR532498} if
one can show that $\lim_{\delta\to0} \sup_{\Prob\in\mathscr
{P}}\Prob(\delta_\rho\leq\delta) = 0$
for every fixed $\rho> 0$. As in \cite{MR532498}, one has for every
$k>0$ the bound
\[
\Prob(\delta_\rho\leq\delta) \leq
\sum_{i=1}^{k}\mathbb{E}\bigl[ \Prob[\tau_{i+1}-\tau_{i} \leq
\delta| \mathscr{F}_{\tau_{i}}] \bigr]+ \Prob(N_\rho>k) .
\]
For every fixed $k>0$, the first term then converges uniformly to $0$
by assumption.
Since the second term is independent of $\delta$, it remains to verify
that converges to~$0$ as
$k\to\infty$, uniformly over $\mathscr{P}$ (convergence for every
fixed $\PP\in\mathscr{P}$ is trivial but not sufficient for our needs).

This is a consequence of \cite{MR532498}, Lemma 1.4.5, provided that
one can find $\lambda< 1$ such that
$\E[e^{-(\tau_{i+1}-\tau_{i})}| \mathscr{F}_{\tau_{i}}
] \leq\lambda$.
This in turn follows from
\begin{eqnarray*}
\E\bigl[e^{-(\tau_{i+1}-\tau_{i})}| \mathscr{F}_{\tau_{i}}\bigr]
&\leq& \Prob[\tau_{i+1}-\tau_{i}\leq
t_{0}|\mathscr{F}_{\tau_{i}}]+e^{-t_{0}}\Prob[\tau_{i+1}-\tau
_{i}>t_{0}| \mathscr{F}_{\tau_{i}}]
\\
&\leq& e^{-t_{0}} + (1-e^{-t_{0}})\Prob[\tau_{i+1}-\tau_{i}\leq
t_{0}| \mathscr{F}_{\tau_{i}}] .
\end{eqnarray*}
Indeed, by choosing $t_{0}$ sufficiently small, this term can be made
strictly less than~$1$, provided
that $\Prob[\tau_{i+1}-\tau_{i}\leq t_{0}| \mathscr{F}_{\tau
_{i}}]$ tends to zero uniformly
(over the members of $\mathscr P$ and over $i$) as $t_{0}$ tends to
zero, which is precisely our assumption.
\end{pf}

We now turn to the
following.
\begin{pf*}{Proof of Theorem \ref{theo:tight}}
Recall that we defined the process $Y^\eps= X^\eps+ \eps g(\eps^{-1}
X^\eps)$ in Section \ref{sec:mainresult}.
Note then that, just as in \cite{1}, Proposition 2.5, the tightness of
the laws of $X^{\eps}_{x}$ is equivalent to that of the laws of
$Y^{\eps}_{x}$.
Therefore, all that remains to be shown is that we have the bound (\ref
{lemma 1.4.4'}) for the law of $Y^\eps$,
uniformly over $\eps\in(0,1]$. The approach that we use is to
consider separately the martingale part and the bounded variation part
for $Y^{\eps}_{y}$ given by (\ref{e:defY}),
and to show that the probability of either of these moving by at least
$\frac{\rho}{2}$ during a time interval $\delta$ tends to zero
uniformly over the initial condition.

Given any fixed $\rho, \gamma>0$, we want to show that there exists a
sufficiently small $\delta>0$ such that $\Prob(\tau_{n+1}-\tau
_{n}\leq\delta| \mathscr{F}_{\tau_{n}})<\gamma$ uniformly over
$\Prob\in\mathscr{P}$ (i.e., uniformly over the laws of $Y^\eps
_x$ with $\eps\in(0,1]$) and $n$. We split the contributions from the
martingale and the bounded variation parts in the following way:
%
%
\begin{eqnarray}\label{10}
&&
\Prob(\tau_{n+1}-\tau_{n}\leq\delta| \mathscr{F}_{\tau_{n}})\nonumber\\
&&\qquad= \Prob_{X(\tau_{n})} \Bigl( \sup_{t<\delta}|Y(t)-Y(0)|>\rho
\Bigr)\nonumber\\
&&\qquad\leq\sup_x \Prob_x \biggl( \sup_{t<\delta} \biggl|\frac{1}{\eps
}\int_{0}^{t}\tilde b_{i}(\eps^{-1}X^{\eps}_{x}(s)) \,ds
\biggr|>\frac{\rho}{2} \biggr)\nonumber\\[-8pt]\\[-8pt]
&&\qquad\quad{} + \sup_x \Prob_x \biggl( \sup_{t<\delta} \biggl|\int
_{0}^{t}\tilde\sigma_{ij}(\eps^{-1}X^{\eps}_{x}(s))\,
dW_{j}(s) \biggr|>\frac{\rho}{2} \biggr) \nonumber\\
&&\qquad\le{2\over\eps\rho} \sup_x \E_x \int_{0}^{t}|\tilde
b_i(\eps^{-1}X^{\eps}_{x}(s))| \,ds\nonumber\\
&&\qquad\quad{} + {2\over\rho}\sup_x \E_x \sup_{t \le\delta} \biggl|\int
_{0}^{t}\tilde\sigma_{ij}(\eps^{-1}X^{\eps}_{x}(s))
\,dW_{j}(s) \biggr| .\nonumber
\end{eqnarray}
Here, we used the Chebychev's inequality to obtain the last bound.
Since the functions $\tilde\sigma_{ij}$ are uniformly bounded, the
stochastic integral appearing in the
second term is easily bounded by $\CO(\sqrt\delta)$ by the
Burkholder--Davis--Gundy inequalities.
Furthermore, by the definition of the corrector function $g$, there
exists $\tilde\eta>0$ such that $\tilde b(x) = 0$ for $x \notin\I
_{\tilde\eta\eps}$, so that
there exists a constant $C$ such that
%
%
\begin{equation}\label{e:boundtight}\quad
\Prob(\tau_{n+1}-\tau_{n}\leq\delta| \mathscr{F}_{\tau_{n}})
\le{C\over\rho\eps} \sup_x \E_x \biggl(\int_0^\delta\one_{\I
_{\tilde\eta\eps}}(X^\eps_x(s)) \,ds \biggr)
+ {C \sqrt\delta\over\rho} .
\end{equation}
For fixed $\rho> 0$, the second term obviously goes to $0$ as $\delta
\to0$, uniformly in $\eps$, so it remains to consider the first term.
As one would expect from the expression for the local time of a
Brownian motion, it turns out that the expected
time spent by the process in $\I_{\tilde\eta\eps}$ scales like
$\eps\sqrt{\delta}$, thus showing that this term is also of order
$\sqrt\delta/ \rho$. Once we are able to show this, the proof is complete.

The occupation time of the interface appearing in the first term of
(\ref{e:boundtight}) is bounded by the trivial
estimate $C\delta/ (\rho\eps)$, which goes to $0$ as $\delta
\rightarrow0$ provided that we consider $\eps\ge\sqrt\delta$, say.
We can therefore
assume without any loss of generality in the sequel that we consider
$\eps< \sqrt\delta$.

The idea to bound the occupation time is the following. We decompose
the trajectory for the process $X^\eps$ into excursions away from the
interface,
separated by pieces of trajectory inside the interface. We first show
that if the process starts inside the interface, then the expected time
spent in the interface before making a new excursion is of order $\eps
^2$. Then, we show that each excursion has a probability
at least $\eps/ \sqrt\delta$ of being of length $\delta$ or more.
This shows that in the time interval $\delta$ of interest,
the process will perform at most of the order of $\sqrt\delta/ \eps$
excursions, so that the total time spent in the interface is indeed
of the order $\eps\sqrt\delta$, thus showing that the first term in
(\ref{e:boundtight}) behaves like $\sqrt\delta/ \rho$, as expected.

More precisely, we first choose two constants $K > 0$ and $\hat K > 0$
such that the chain of implications
%
%
\begin{equation}\label{e:chainimplications}
\{X^\eps\in\I_{\tilde\eta\eps}\} \Rightarrow\{Y^\eps\in\I
_{\hat K \eps}\} \Rightarrow\bigl\{X^\eps\in\I_{(K-1) \eps}\bigr\}
\Rightarrow\{X^\eps\in\I_{K\eps}\}
\end{equation}
holds. We then set up a sequence of stopping times in the following
way. We set $\phi_0 = 0$ and we set recursively
\begin{eqnarray*}
\sigma_n &=& \inf\{t \ge\phi_n \dvtx X^\eps(t) \notin\I_{K\eps
}\} ,\\
\phi_n &=& \inf\{t \ge\sigma_{n-1} \dvtx Y^\eps(t) \in\I_{\hat
K\eps}\} .
\end{eqnarray*}
[Note that we can have $\sigma_0 = 0$ if the initial condition does
not belong to $\I_{K\eps}$. Apart from that, the second implication
in (\ref{e:chainimplications}) shows that increments from
one stopping time to the next are always strictly positive.] This
construction was chosen in such a way that the times when
$X^\eps\in\I_{\tilde\eta\eps}$ always fall between $\phi_n$ and
$\sigma_n$ for some $n \ge0$.
In particular, if we set
\[
N = \inf\{n \ge0 \dvtx\phi_{n+1} - \sigma_n \ge\delta\} ,
\]
then we have the bound
\begin{eqnarray*}
\sup_x \E_x \biggl(\int_0^\delta\one_{\I_{\tilde\eta\eps
}}(X^\eps_x(s)) \,ds \biggr)
&\le&\sup_x \E_x \Biggl(\sum_{n=0}^N (\sigma_n - \phi_n) \Biggr) \\
&=& \sup_x \sum_{n=0}^\infty\E_x \bigl((\sigma_n - \phi_n) \one
_{N\ge n} \bigr)\\
&=& \sum_{n=0}^\infty\sup_x\PP_{x}(N \ge n) \sup_{x}\E_{x}
\bigl(\E
_{X^\eps(\phi_n)}
\sigma_1\bigr),
\end{eqnarray*}
where we used the strong Markov property and the fact that $\{N \ge n\}
$ is $\F_{\phi_n}$-measurable in order to obtain the
last identity. It follows from the definition of $N$ that this
expression is in turn bounded by
\[
\sup_{x\in\R^d} \E_x \sigma_0 \sum_{n \ge0}\Bigl(\sup_{x \notin
\I_{K\eps}} \PP_x(\phi_0 < \delta)\Bigr)^n = {\sup_{x\in\R^d}
\E_x
\sigma_0 \sup_{x\notin\I_{K\eps}} \PP_x(\phi_0 < \delta)
\over
\inf_{x\notin\I_{K\eps}} \PP_x(\phi_0 \ge\delta)} .
\]
We now bound both terms appearing in this expression separately.

First, we turn to the expected escape time from the interface, $\E_x
\sigma_0$.
The idea is to use a comparison argument just like in \cite{1},
Proposition 3.8. We define a ``worst-case scenario'' process
$V_{x}^{\varepsilon}$, which is the solution to the SDE with initial
condition $x$, diffusion coefficient 1 and drift coefficient given by
$b^{\varepsilon}_{V}$, where
\[
b^{\varepsilon}_{V}(x) = \cases{
\dfrac{-b_{V}}{\eps}, &\quad for $x \ge0$,\vspace*{2pt}\cr
\dfrac{b_{V}}{\varepsilon}, &\quad for $x < 0$,}
\]
for some constant $b_V > 0$.
We then have the following lemma.
\begin{lemma}\label{lem:boundwc}
There exist $b_V>0$ and $\tilde K > 0$ such that, if we define $\tau
^{\tilde K} = \inf\{t \ge0 \dvtx V_x^\eps(t) \notin\I_{\tilde
K\eps}\}$, we have
\[
\E_x \sigma_0 \le\E_x \tau^{\tilde K} ,
\]
for every $x \in\R^d$.
\end{lemma}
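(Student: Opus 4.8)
The plan is to construct a one-dimensional comparison process that is stochastically "worse" than the first coordinate of $X^\eps$ as far as escape from a neighbourhood of the interface is concerned, and then to invoke a comparison theorem for one-dimensional diffusions. The starting observation is that only the first coordinate $X^\eps_1$ matters: the set $\I_{K\eps}$ is a slab defined purely through $x_1$, so $\sigma_0$ is the first exit time of $X^\eps_1$ from $[-K\eps,K\eps]$. By It\^o's formula, $X^\eps_1$ is itself a one-dimensional diffusion with unit diffusion coefficient and drift $\eps^{-1} b_1(\eps^{-1}X^\eps)$ — but this drift depends on the full vector $X^\eps$, not just on $X^\eps_1$, so $X^\eps_1$ is not autonomously Markov. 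The point of the worst-case process $V^\eps$ is to replace this genuinely $d$-dimensional drift by a one-dimensional autonomous drift that always pushes at least as hard \emph{towards} the interface, so that $V^\eps$ exits a slightly larger slab $\I_{\tilde K \eps}$ no faster, in expectation, than $X^\eps_1$ exits $\I_{K\eps}$.

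The key steps, in order, are as follows. First I would rescale: setting $Z(t) = \eps^{-1} X^\eps_1(\eps^2 t)$, the process $Z$ is a unit-diffusion one-dimensional process with drift bounded in absolute value by $B \eqdef \sup_{x}\abs{b_1(x)}$, and $\sigma_0 = \eps^2 \tau$ where $\tau$ is the exit time of $Z$ from $[-K,K]$; similarly $V^\eps$ rescales to a process $\hat V$ with unit diffusion and drift $\mp b_V$ for $\hat V \gtrless 0$. So the lemma reduces to the $\eps$-free statement $\E_x \tau \le \E_x \hat\tau^{\tilde K}$ for the exit time $\hat\tau^{\tilde K}$ of $\hat V$ from $[-\tilde K,\tilde K]$. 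Second, I would choose $b_V > B$: then on $\{x_1 > 0\}$ the drift of $Z$ is $\ge -B \ge -b_V$ and on $\{x_1 < 0\}$ it is $\le B \le b_V$, so $\hat V$'s drift always dominates $Z$'s in the direction pointing away from the interface — equivalently, $\hat V$'s drift points inward at least as strongly. Third, I would make this precise with a pathwise comparison argument run separately on the two half-lines, or more cleanly by comparing the scale functions and speed measures: writing the expected exit time via the standard Feller formula $\E_x \tau = \int G(x,y)\, m(dy)$ with $G$ the Green's function of the generator on the interval, one shows that enlarging the interval from $[-K,K]$ to $[-\tilde K,\tilde K]$ and replacing the drift of $Z$ by that of $\hat V$ both only increase this integral, for $\tilde K$ chosen large enough (depending on $B$ and $K$) that the comparison of scale functions works at the endpoints. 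Taking the supremum over the (suppressed) transverse coordinate and over the sign of $x_1$ yields the uniform bound for all $x \in \R^d$.

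The main obstacle is the lack of a one-dimensional Markov structure for $X^\eps_1$: a naive SDE pathwise comparison theorem does not apply because the two drifts are not functions of the same scalar variable on the same probability space. The cleanest way around this is \emph{not} to attempt a coupling but to work at the level of the elliptic boundary value problem: $u(x) = \E_x \sigma_0$ solves $\CL u = -1$ on $\I_{K\eps}$ with $u = 0$ on $\d\I_{K\eps}$, and one checks that $w(x) = \E_x \tau^{\tilde K}$ is a supersolution, i.e. $\CL w \le -1$ on $\I_{K\eps}$ and $w \ge 0 = u$ on $\d\I_{K\eps}$, after which the maximum principle gives $u \le w$. Verifying $\CL w \le -1$ amounts to computing $\CL$ applied to the explicit one-dimensional function $w$ (which depends only on $x_1$, so only $\tfrac12 \d_1^2 w + b_1 \d_1 w$ enters) and using $b_V > \sup\abs{b_1}$ together with the sign of $\d_1 w$ — which is negative for $x_1 > 0$ and positive for $x_1 < 0$ by the symmetry of $\hat V$'s construction. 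This is exactly the structure of \cite[Proposition~3.8]{1}, and the only genuinely new point in the multi-dimensional setting is checking that the transverse Laplacian terms $\d_j^2 w$ vanish because $w$ is independent of $x_j$ for $j \ge 2$, so no extra terms are generated.
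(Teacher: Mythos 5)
Your proposal is correct in its final form, but two remarks are in order. First, on the comparison with the paper: the paper gives no argument here at all, deferring to the comparison result \cite[Proposition~3.8]{1}; the implicit point is that the one-dimensional proof only uses that the drift of the first coordinate is bounded in absolute value by $\sup_x|b_1(x)|/\eps$, not that it is a function of $X_1^\eps$ alone, which is exactly the obstacle you identify. Second, of the two routes you sketch, only the last one works: the detour through Feller's Green's function / speed measure representation for $Z$ is unavailable precisely because $Z$ is not a one-dimensional diffusion (no scale function or speed measure exists for it), so that step should simply be dropped -- as you in effect do. Your closing supersolution argument is sound and mirrors the device the paper itself uses for Lemma~\ref{lem:boundexit}: with $w(x)=\E_{x_1}\tau^{\tilde K}$ depending on $x_1$ only, one has on $\{x_1>0\}$ that ${1\over 2}w''=-1+\eps^{-1}b_V w'$, hence $\CL_\eps w=-1+\eps^{-1}\bigl(b_V+b_1(\eps^{-1}x)\bigr)w'\le -1$ since $w'\le 0$ there and $b_V\ge\sup_x|b_1(x)|$, and symmetrically on $\{x_1<0\}$; moreover $w'(0)=0$, so both one-sided second derivatives at the origin equal $-2$ and $w$ is in fact $C^2$, so the kink in the worst-case drift causes no regularity trouble. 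The one technical care you gloss over is the maximum principle on $\I_{K\eps}$, which is unbounded in the transverse directions; either quotient by the periodicity to work on $[-K\eps,K\eps]\times\T^{d-1}$ (the same remark the paper makes in the proof of Lemma~\ref{lem:boundexit}), or, more simply, bypass the maximum principle and the a priori finiteness of $\E_x\sigma_0$ altogether by applying Dynkin's formula to $w$ along $X^\eps$ up to $t\wedge\sigma_0$: since $\CL_\eps w\le -1$ on $\I_{K\eps}$ and $w\ge 0$, this gives $\E_x(t\wedge\sigma_0)\le w(x)$, and letting $t\to\infty$ yields $\E_x\sigma_0\le\E_x\tau^{\tilde K}$ (note that this argument already works with $\tilde K=K$, since only $w\ge 0$ on $\d\I_{K\eps}$ is used, so your insistence on taking $\tilde K$ large is harmless but unnecessary).
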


The proof of Lemma \ref{lem:boundwc} is almost identical to that of
\cite{1}, Proposition 3.8, so we are going to omit it. A
straightforward calculation
using the particular form of the drift coefficient for $V$ allows to
check that there exists indeed a constant $C>0$ such that the bound
\[
\sup_x \E_x \tau^{\tilde K} \le C \eps^2 ,
\]
holds so that, combining this with Lemma \ref{lem:boundwc}, we have
$\sup_{x\in\R^d} \E_x \sigma_0 \le C\eps^2$.

Let us now turn to the bound on $\PP_x(\phi_0 \ge\delta)$. The idea
here is to look at the process $Y^\eps$ instead of $X^\eps$ and to time-change
it in such a way that we can compare it to a standard Brownian motion.
Note first that the last two implications in (\ref{e:chainimplications})
show that if we start with $X^\eps$
anywhere outside of $\I_{K\eps}$, then the first component of $Y^\eps
$ has to travel by at least $\eps$ before the process $Y^\eps$ can
hit $\I_{\hat K\eps}$.
Furthermore, it follows from (\ref{e:defY}) that the time change $C_t$
such that $Y^\eps(C_t)$ is a standard Brownian motion satisfies
$C_t \ge ct$ for some $c > 0$. It therefore follows that, setting $H(z)
= \inf_{t>0}\{B_{t}>z\}$, one has the lower bound
\[
\inf_{x \notin\I_{K\eps}} \PP_x(\phi_0 \ge\delta) \ge\PP
\bigl(H(\eps) \ge\delta/c\bigr) .
\]
The explicit expression for the law of $H(z)$ given in \cite
{MR1477407}, page 163, equation 2.02, yields in turn
\[
\Prob\bigl(H(\eps) \ge\delta/c\bigr) =\int_{\delta/(c\eps^2)}^{\infty
}\frac{e^{-1/(2t)}}{\sqrt{2\pi} t^{{3/2}}} \,dt.
\]
It follows immediately that this in turn is bounded from below by $C
\eps/\sqrt{\delta}$ for some $C>0$, provided that
$\eps\le\sqrt\delta$. Collecting these bounds completes the proof
of Theorem \ref{theo:tight}.
\end{pf*}

\section{Main tool for identifying the limit process}
\label{sec:maintool}


Instead of considering a graph as before, we will consider a
generalized multidimensional version different from that considered by
Freidlin and Wentzell in \cite{MR1245308}, Section 6. Note that the
generalization considered
here is different (and actually simpler) than the one considered in
\cite{MR2253881}.
We consider processes in $\mathbb{R}^{d}$ and we set $I_{-} = \{x \in
\R^d \dvtx x_1 < 0\}$, and similarly for
$I_+$. We consider a family of $\R^d$-valued processes $X^\eps$ and
we denote by $\tau^{\eps}$
the first hitting time of $\I_{\eps\eta}$. Correspondingly, $\tau
^{\delta}$ is the first escape time of the
set $\I_{\delta}$ by $X^{\eps}$.

With this the main tool, will be the following multidimensional
analogue of~\cite{MR1245308}, Theorem 4.1.
\begin{theorem}\label{maintheoremm}
Let $\bar{\CL}_{i}$ be second order differential operators on $I_{i}$
with bounded coefficients and let $D_i$ be
some sets of test functions over $I_i$ whose members are bounded and
have bounded
derivatives of all orders.
Suppose that for $i \in\{+,-\}$, any function $f \in D_{i}$ and for
any $\lambda> 0$, the bound
%
%
\begin{eqnarray} \label{2.1}
&&\mathbb{E}_{x} \biggl[e^{-\lambda\tau^{\varepsilon}}f
(X^{\varepsilon}(\tau^{\varepsilon})) - f(X^{\eps
}(0))\nonumber\\[-8pt]\\[-8pt]
&&\qquad\hspace*{0pt}{} + \int_{0}^{\tau^{\varepsilon}}e^{-\lambda t}
\bigl(\lambda f
(X^{\varepsilon}(t)) - \bar{\CL}_{i}f(X^{\varepsilon
}(t)) \bigr) \,dt \biggr]
= O(k(\varepsilon)) ,\nonumber
\end{eqnarray}
holds as $\varepsilon\rightarrow0$, uniformly with respect to $x \in
I_{i}$. Assume furthermore that the rate $k$ is such that $\lim
_{\varepsilon\rightarrow0}k(\varepsilon) = 0$.

Assume that, for every $\lambda>0$ and every $i \in\{+,-\}$, there
exist functions $u_{i,\lambda} \in D_i$
such that $\bar{\CL}_{i}u_{i,\lambda}(x) = \lambda u_{i,\lambda
}(x)$ holds for $x \in I_i$ with $|x_1| \le1$
and such that $u_{\pm,\lambda}(x) = 1$ for $x_1 = 0$ and $x_1 = \pm1$.

Assume that there exists a rate $\delta= \delta(\varepsilon)
\rightarrow0$
such that $\delta(\varepsilon)/k(\varepsilon) \rightarrow\infty$
as $\varepsilon\rightarrow0$ and such that for $\lambda> 0$,
%
%
\begin{equation} \label{2.2}
\mathbb{E}_{x}^{\varepsilon} \biggl[\int_{0}^{\infty}e^{-\lambda t}
\one_{(-\delta,\delta)}(X_{1}^{\varepsilon}(t)) \,dt
\biggr]\rightarrow0
\end{equation}
as $\varepsilon\rightarrow0$, uniformly in the initial point. Assume
the convergence
%
%
\begin{equation} \label{2.3}
\mathbb{P}_{x}^{\varepsilon}[X^{\varepsilon}(\tau^{\delta
}) \in I_{i}] \rightarrow p_{i} ,
\end{equation}
holds uniformly in $x$ in the set $\I_{\eps\eta}$ for some constants
$p_\pm$ with $p_+ + p_- = 1$. Assume
furthermore that there exist constants $\alpha_j$ and $C$ such that
%
%
\begin{equation} \label{othercomponents}
\frac{1}{\delta} \mathbb{E}_{x}^{\eps} [X_{j}^{\varepsilon
}(\tau^{\delta})-x_j ]\rightarrow
\alpha_{j} ,\qquad
\frac{1}{\delta^{2}} \mathbb{E}_{x}^{\eps} \bigl[
\bigl(X_{j}^{\varepsilon}(\tau^{\delta})-x_j\bigr)^{2}
\bigr]\leq
C ,
\end{equation}
for $j \ge2$. Again, the limit is assumed to be uniform over $x \in\I
_{\eps\eta}$ as $\eps\rightarrow0$, and the inequality is assumed
to be uniform over all $\eps\in(0,1]$ and all $x \in\I_{\eps\eta}$.

Let then $D$ be the set of continuous functions $f \dvtx\R^d \to\R
$ such that the restriction of $f$ to $I_i$ belongs
to $D_i$ and such that
the gluing condition (\ref{derivative}) holds. Then, for any fixed
$f\in D$, $t_{0} \geq0$ and $\lambda> 0$,
%
%
\begin{eqnarray} \label{2.4}
&&\Delta(\varepsilon) = \esssup \biggl| \mathbb
{E}_{x}^{\varepsilon}
\biggl[\int_{t_{0}}^{\infty}e^{-\lambda t} [ \lambda f
(X^{\varepsilon}(t)) - \bar{\CL}f(X^{\varepsilon
}(t)) ] \,dt \nonumber\\[-8pt]\\[-8pt]
&&\qquad\hspace*{120pt}{}- e^{-\lambda t_{0}}f(X^{\varepsilon}(t_{0}))
\Big|
\mathscr{F}_{[ 0, t_{0}]} \biggr] \biggr| \rightarrow0\nonumber
\end{eqnarray}
as $\varepsilon\rightarrow0$, uniformly with respect to $x$. In
particular, every weak limit of $X^\eps$
as $\eps\to0$ satisfies the martingale problem for $\bar\CL$.
\end{theorem}
\begin{remark}
Note that we did not specify how ``large'' the sets $D_i$ of admissible
test functions need to be. If these sets
are too small, then the theorem still holds, but the corresponding
martingale problem might become
ill-posed.
\end{remark}
\begin{pf*}{Proof of Theorem \ref{maintheoremm}}
Since the proof is virtually identical to that of~\cite{MR1245308},
Theorem 4.1, we only sketch it here.
The basic idea behind the proof given by Freidlin and Wentzell is to
rewrite (\ref{2.4}) using the strong Markov property of $X^{\eps}$
as a sum of terms between successive stopping times. To this effect,
set, for example, $\sigma_{0} = 0$ and then
recursively $\phi_{n} = \inf\{t>\sigma_{n}\dvtx X_{1}^{\eps}(t) \in
\I_{\eps\eta}\}$, $\sigma_{n+1} = \inf\{t>\phi_{n}\dvtx X_{1}^{\eps
}(t) \notin\I_\delta\}$.
They then break up the term produced from (\ref{2.4}) into two sums of
analogous terms between times $\sigma_{n}$ and $\phi_{n}$ and those
between $\phi_{n}$ and $\sigma_{n+1}$.

The terms covering the time intervals $[\sigma_n, \phi_n]$ are
bounded exactly as in \cite{MR1245308}, making use of (\ref{2.1}), together
with the bound $\sum_{n}\mathbb{E}_{x}e^{-\lambda\sigma_{n}} = \CO
(1/\delta)$ which follows from the existence of the functions
$u_{i,\lambda}$ just as in \cite{MR1245308}.

Using assumption (\ref{2.2}), the terms covering the time intervals
$[\phi_n, \sigma_{n+1}]$ are then simplified to
\[
\sum_{n}e^{-\lambda\phi_{n}} \bigl(f(X^{\eps}(\sigma
_{n+1}))-f(X^{\eps}(\phi_{n})) \bigr) ,
\]
modulo contributions that converge to $0$ as $\eps\to0$.
Since the expectation of this term is bounded by
\[
\sup_{x \in\I_{\eta\eps}} \E_x \bigl(f(X^{\eps}(\tau
^\delta))-f(x)\bigr) \sum_{n} \E e^{-\lambda\phi_{n}} ,
\]
and since we already know that $\sum_n \E e^{-\lambda\phi_{n}} = \CO
(1/\delta)$, in remains to show that the supremum is
of order $o(\delta)$. It follows from Taylor's expansion and the fact
that $f \in\CC^2$ outside of the interface,
that on the event $\Omega_+ \stackrel{\mathrm{def}}{=}\{X_1^\eps
(\tau^\delta) > 0\}$,
one has
\[
f(X^{\eps}(\tau^\delta))-f(x) = \delta\partial_1
f(x)|_{I_+} + \sum_{i=2}^d \partial_i f(x) \bigl(X_i^{\eps}(\tau
^\delta) - x_i\bigr)
+ \CO\bigl(| X_i^{\eps}(\tau^\delta) - x_i|^2\bigr) ,
\]
and similarly on $\Omega_- = \{X_1^\eps(\tau^\delta) < 0\}$.
Combining this with (\ref{othercomponents}), we thus have
\begin{eqnarray*}
\E_x \bigl(f(X^{\eps}(\tau^\delta))-f(x)\bigr)
&=& \delta\,\partial_1 f(x)|_{I_+}\PP_x(\Omega_+) + \delta\,\partial
_1 f(x)|_{I_-}\PP_x(\Omega_-) \\
&&{} + \delta\sum_{i=2}^d \alpha_i \,\partial_i f(x) +
o(\delta) .
\end{eqnarray*}
Since we assume that $\PP_x(\Omega_\pm)\to p_\pm$ uniformly over
$x\in\I_{\eta\eps}$, the required bound now follows from the
gluing condition.
\end{pf*}

Most of the remainder of this article is devoted to the verification of
the assumptions of Theorem \ref{maintheoremm}.
The bounds (\ref{2.1}) and (\ref{2.2}) will be relatively
straightforward to verify and this will form the content of
the remainder of this section. The convergence (\ref{2.3}) is the one
that is most difficult to obtain and will be
the content of Section \ref{sec:ppm}. Finally, we will show that (\ref
{othercomponents}) holds in Section \ref{sec:drift}. We start by the
following result.
\begin{lemma}
Let $\bar\CL_\pm$ be as in (\ref{e:defLpm}) and let $X^\eps$ be the
family of processes from Section \ref{sec:mainresult}. Then, the bound
(\ref{2.1}) holds with $k(\eps) = \eps$ for every $\lambda> 0$ and
for every smooth bounded function
$f \dvtx I_i \to\R$ that has bounded derivatives of all orders.
\end{lemma}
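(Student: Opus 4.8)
The plan is to verify \eref{2.1} by applying It\^o's formula to the corrected process $Y^\eps$ rather than directly to $X^\eps$, exploiting the fact established in Section~\ref{sec:tight} that the corrected drift $\tilde b = \CL g + b$ vanishes outside of $\I_{\tilde\eta\eps}$. First I would write $f(X^\eps(t))$ in terms of $f$ evaluated at $Y^\eps(t)$ plus an error: since $Y^\eps = X^\eps + \eps g(\eps^{-1}X^\eps)$ and $g$ is bounded, $|X^\eps - Y^\eps| = \CO(\eps)$, so by the smoothness and bounded derivatives of $f$ we have $f(X^\eps(t)) = f(Y^\eps(t)) + \CO(\eps)$, and similarly $\bar\CL_i f(X^\eps(t)) = \bar\CL_i f(Y^\eps(t)) + \CO(\eps)$ on $I_i$. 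All these $\CO(\eps)$ terms, once integrated against $e^{-\lambda t}$ over $[0,\tau^\eps]$ and multiplied by $\lambda$, contribute $\CO(\eps)$ to the left-hand side of \eref{2.1}, which is consistent with $k(\eps)=\eps$ (note $\tau^\eps$ is an a.s.\ finite stopping time; one should be slightly careful and either truncate or use the $e^{-\lambda t}$ weight to make the time integral converge, but this is routine).

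Next I would apply It\^o's formula to $e^{-\lambda t} f(Y^\eps(t))$ between $0$ and $\tau^\eps$. Using \eref{e:defY}, the generator acting on $f$ along $Y^\eps$ splits into: (i) the second-order part $\frac12 (\tilde\sigma\tilde\sigma^T)_{k\ell}(\eps^{-1}X^\eps)\,\d_k\d_\ell f$; (ii) the drift part $\eps^{-1}\tilde b_i(\eps^{-1}X^\eps)\,\d_i f$; (iii) the martingale term $\int \d_i f\,\tilde\sigma_{ij}\,dW_j$, whose expectation vanishes. Now, on the time set where $X^\eps \notin \I_{\tilde\eta\eps}$, the drift term (ii) vanishes identically, and there the frozen coefficients $\tilde\sigma\tilde\sigma^T(\eps^{-1}X^\eps)$ are exactly the periodic homogenization data whose cell average is $D^\pm$. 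So on $I_i$ away from the interface the It\^o generator is $\frac12(\tilde\sigma\tilde\sigma^T)_{k\ell}\d_k\d_\ell f$, which differs from $\bar\CL_i f = \frac12 D^\pm_{k\ell}\d_k\d_\ell f$ only by the fluctuation of the periodic coefficient around its mean. The key point is that this fluctuation, integrated in time against the occupation measure of $X^\eps$ outside the interface, is $\CO(\eps)$: one handles it by a standard "second corrector" argument, i.e.\ solving a further cell problem $\CL_\pm h_{k\ell} = (\tilde\sigma\tilde\sigma^T)_{k\ell} - D^\pm_{k\ell}$ on $\T^d$, and noting that $\eps^2 h_{k\ell}(\eps^{-1}X^\eps)$ is $\CO(\eps^2)$ while its It\^o differential produces precisely the fluctuation term plus a martingale plus lower-order drift supported near the interface. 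This is exactly the mechanism of \cite{lions} and contributes only $\CO(\eps)$ to \eref{2.1}.

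The remaining contributions come from the interface layer $\I_{\tilde\eta\eps}$, and this is where I expect the main work. Two pieces live there: the large drift term $\eps^{-1}\int_0^{\tau^\eps}\one_{\I_{\tilde\eta\eps}}(X^\eps)\,\tilde b_i\,\d_i f\,dt$, and the part of the second-order term and of the second-corrector error supported in $\I_{\tilde\eta\eps}$. For the latter, boundedness of the integrand together with the occupation-time estimate already proved in Theorem~\ref{theo:tight} — namely $\sup_x\E_x\int_0^\delta \one_{\I_{\tilde\eta\eps}}(X^\eps(s))\,ds = \CO(\eps\sqrt\delta)$, and more crudely $\CO(\eps)$ uniformly in time via the $e^{-\lambda t}$ weight — gives a bound of order $\eps$. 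For the large-drift term, the factor $\eps^{-1}$ against an occupation time of order $\eps$ gives an $\CO(1)$ quantity, \emph{not} $\CO(\eps)$; however, since $X^\eps$ starts in $I_i$ and $\tau^\eps$ is the \emph{first} hitting time of $\I_{\eps\eta}$, this term is in fact $0$ — over $[0,\tau^\eps)$ the process never enters $\I_{\eps\eta}$, hence (enlarging $\tilde\eta$ if necessary so that $\I_{\tilde\eta\eps}\subseteq\I_{\eta\eps}$, or rather observing $\tilde b$ is supported in $\I_{\tilde\eta\eps}$ with $\tilde\eta$ possibly larger than $\eta$) one must be a little careful. The clean way is: since we only need \eref{2.1} for $x\in I_i$ and up to the first hitting time of $\I_{\eps\eta}$, shrink attention to the region $|X_1^\eps|>\eps\eta$; there $\tilde b$ may still be nonzero in the annulus $\eps\eta<|X_1^\eps|<\tilde\eta\eps$, so redefine $g$ in Section~\ref{sec:tight} (we are free to) so that $\tilde\eta=\eta$, making $\tilde b$ vanish on all of $\{|X_1^\eps|>\eps\eta\}$ and killing this term outright. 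Then collecting: the total left-hand side of \eref{2.1} is $\CO(\eps)$, uniformly in $x\in I_i$, which is the claim with $k(\eps)=\eps$. The only genuine obstacle is the uniformity in $x$ of the second-corrector error estimate near the interface, which is dispatched by the time-integrability weight $e^{-\lambda t}$ and the uniform occupation bound from Theorem~\ref{theo:tight}.
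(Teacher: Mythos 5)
Your proposal is correct and follows essentially the same route as the paper: reduce to the corrected process $Y^\eps$, apply It\^o's formula with the $e^{-\lambda t}$ weight up to $\tau^\eps$ (where the corrected drift $\tilde b$ vanishes, since the process has not yet entered $\I_{\eps\eta}$), and replace the oscillating coefficient $(\tilde\sigma\tilde\sigma^T)(\eps^{-1}X^\eps)$ by its homogenized average $D^\pm$ at the cost of an $\CO(\eps)$ error. The only difference is that you re-derive that averaging estimate by hand via a second corrector solving an auxiliary cell problem, whereas the paper simply imports it as Lemma~3.4 of its one-dimensional companion paper \cite{1}.
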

\begin{pf}
It follows from \cite{1}, Lemma 3.4, that, for any initial point $x$
with $x_{1} \neq0$ and for $\eps$ sufficiently small so that
$x \notin\I_{\eps\eta}$,
%
%
\begin{equation}\label{e:average}
\E_{x} \biggl[\int_{0}^{\tau^{\eps}} e^{-\lambda s}f
(X^{\varepsilon}(s))h \biggl(\frac{X^{\varepsilon
}(s)}{\varepsilon} \biggr) \,ds \biggr] = \CO(\varepsilon) ,
\end{equation}
for $h$ centered with respect to $\mu_+$ (resp., $\mu_-$ if $x_1 <
0$). We assume that $x_1 > 0$ from now on, but the calculations are identical
for the case $x_1 < 0$.

Note now that it suffices to obtain the bound (\ref{2.1}) for the
family of processes $Y^\eps$, since
$\|Y^\eps(t) - X^\eps(t)\| = \CO(\eps)$, uniformly. Applying It\^
o's formula to
$e^{-\lambda\tau^{\eps}}\times f(Y^{\varepsilon}(\tau^{\eps}))$,
we obtain the identity
\begin{eqnarray*}
e^{-\lambda\tau^{\eps}}f(Y^{\varepsilon}(\tau^{\eps})
) &=&
f(y) + \int_{0}^{\tau^{\eps}}-\lambda e^{-\lambda
s}f(Y^{\varepsilon}(s)) \,ds\\
&&{} + {1\over
2}\int_{0}^{\tau^{\eps}}e^{-\lambda s}(\tilde\sigma
_{ik}\tilde
\sigma_{kj}) \biggl(\frac{X^{\eps}}{\eps} \biggr)\,\partial_{ij}^2
f(Y^{\eps}(s)) \,ds \\
&&{} + \int_{0}^{\tau^{\eps}}e^{-\lambda s}
\tilde
\sigma_{ik} \biggl(\frac{X^{\eps}(s)}{\eps} \biggr)\,\partial
_{i}f(Y^{\eps}(s)) \,dW_{k}(s) .
\end{eqnarray*}
Since $|Y^\eps- X^\eps| \le\CO(\eps)$ and since all derivatives of
$f$ are assumed to be bounded, it then follows from (\ref{e:average}) that
\begin{eqnarray*}
\E(e^{-\lambda
\tau^{\eps}}f(Y^{\varepsilon}(\tau^{\eps})) ) &=& f(y)
-\lambda\E\int_{0}^{\tau^{\eps}} e^{-\lambda
s}f(Y^{\varepsilon}(s)) \,ds\\
&&{} + {1\over2}\E
\int_{0}^{\tau^{\eps}}e^{-\lambda s}D_{ij}^+ \,\partial_{ij}^2
f(Y^{\eps}(s)) \,ds + \CO(\eps) ,
\end{eqnarray*}
which is precisely the required result.
\end{pf}

Additionally we have that the solution to $\bar\CL_{i}u = \lambda u$
on $I_{i}$, $u=1$ on $\{x_1 = 0\}$ and $\{x_1 = \pm1\}$, is bounded
and has bounded derivatives of all orders. This follows from the fact
that $u$ is given explicitly by $u(x) = C_1e^{\sqrt{\lambda
(D_{11}^{\pm})^{-1}}x_1} + C_2 e^{-\sqrt{\lambda(D_{11}^{\pm})^{-1}}x_1}$
for some constants $C_i$.
We now show that the process $Y^\eps$ satisfies the bound (\ref{2.2}),
that is, it does not spend too much time in the
vicinity of the interface.
\begin{lemma}\label{lem:timeinterface}
If we choose $\delta= \eps^\alpha$ for any $\alpha\in({1\over2},
1)$, then (\ref{2.2}) holds
for the family of processes $X^\eps$ from Section \ref{sec:mainresult}.
\end{lemma}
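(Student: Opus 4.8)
The quantity to control is $\mathbb{E}_x^\eps \int_0^\infty e^{-\lambda t}\one_{(-\delta,\delta)}(X_1^\eps(t))\,dt$, and the plan is to dominate the resolvent-weighted occupation time of the strip $\I_\delta$ by the corresponding unweighted occupation time up to a fixed time and a tail, both of which we can estimate by the excursion-decomposition machinery already set up in the proof of Theorem~\ref{theo:tight}. First I would note that $e^{-\lambda t}\le 1$ and that for any fixed $T>0$,
\begin{equ}
\mathbb{E}_x^\eps \int_0^\infty e^{-\lambda t}\one_{(-\delta,\delta)}\bigl(X_1^\eps(t)\bigr)\,dt \le \sup_x\mathbb{E}_x^\eps\int_0^T \one_{\I_\delta}\bigl(X^\eps(t)\bigr)\,dt + \int_T^\infty e^{-\lambda t}\,dt\;,
\end{equ}
so that the second term is $\lambda^{-1}e^{-\lambda T}$, which can be made as small as we like by choosing $T$ large (independently of $\eps$). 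Thus it suffices to show that for each fixed $T$, the occupation time $\sup_x\mathbb{E}_x^\eps\int_0^T\one_{\I_\delta}(X^\eps(s))\,ds\to 0$ as $\eps\to0$ when $\delta=\eps^\alpha$, $\alpha\in(\tfrac12,1)$.

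For this I would re-run the excursion argument from the proof of Theorem~\ref{theo:tight}, but now with the \emph{two} relevant scales being the width $\delta=\eps^\alpha$ of the strip we are measuring occupation of, and the natural width $\CO(\eps)$ of the boundary layer where the drift acts. Concretely, set up stopping times $\phi_n$ (entry into $\I_{\hat K\eps}$) and $\sigma_n$ (exit from $\I_{K\eps}$, or from $\I_{K\delta}$) exactly as before; the time spent in $\I_\delta$ is covered by the intervals $[\phi_n,\sigma_n]$ plus the pieces of excursions $[\sigma_n,\phi_{n+1}]$ that dip back into $\I_\delta$. As in Theorem~\ref{theo:tight} one has $\sup_x\mathbb{E}_x\sigma_0\le C\eps^2$ for the time spent near the drift region per visit. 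The number of excursions before time $T$ is of order $T/\delta^2$ (an excursion from distance $\delta$ from the interface takes time of order $\delta^2$ to return, by the Brownian scaling of $X^\eps$ far from the interface, using that the drift vanishes outside $\I_{\tilde\eta\eps}$), and each such excursion spends time of order $\delta^2$ outside $\I_\delta$ but only $\CO(\eps^2)$ worth of "drift time" near the interface. Summing, the total occupation time of $\I_\delta$ in $[0,T]$ is bounded by a constant times $T$ — which is \emph{not} small, so the crude bound is insufficient and one must be more careful: the point is that an excursion reaching macroscopic distance $\CO(1)$ spends only a fraction $\CO(\delta)$ of its length inside $\I_\delta$ (scale function / occupation density of Brownian motion), and such "deep" excursions carry most of the clock. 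Quantitatively, using the Brownian occupation-density estimate $\mathbb{E}\int_0^{\tau}\one_{(-\delta,\delta)}(B_s)\,ds\le C\delta$ for $B$ run until it exits $\I_1$ started from $\d\I_\delta$, together with $\sum_n\mathbb{E}e^{-\lambda\phi_n}=\CO(1/\delta)$, gives a total of order $\delta\cdot(1/\delta)\cdot(\text{something})$; combined with the $C\eps^2/\delta = C\eps^{2-\alpha}$ contribution from the near-interface pieces and the fact that $2-\alpha>1>0$, one obtains a bound that tends to $0$.

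The main obstacle is getting the bookkeeping of the two scales right: one needs the occupation time \emph{outside} a macroscopic ball to be controlled by the resolvent factor $e^{-\lambda t}$ (hence the reduction to a fixed horizon $T$ at the start), while \emph{inside} a macroscopic ball one must exploit that Brownian motion spends only $\CO(\delta)$ time in an $\CO(\delta)$-strip per unit of "outer" time, and \emph{inside} the $\CO(\eps)$ boundary layer one falls back on the $\CO(\eps^2)$ per-visit estimate of Lemma~\ref{lem:boundwc}. The interplay of these three regimes, and checking that the constraint $\alpha\in(\tfrac12,1)$ is exactly what makes all three contributions vanish (the lower bound $\alpha>\tfrac12$ is needed so that $\delta\gg\eps$, ensuring the $\CO(\eps^2)$ drift-region times are negligible against $\delta^2$, and $\alpha<1$ so that $\delta\to0$), is the delicate part; the rest is a routine application of the strong Markov property and the Burkholder–Davis–Gundy / reflection estimates already invoked in Theorem~\ref{theo:tight}.
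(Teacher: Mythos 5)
Your high-level plan (reduce to a finite horizon, decompose into excursions, and balance the $\CO(\eps)$ near-interface scale against the $\CO(\delta)$ strip scale) points in a sensible direction, but it misses the specific obstruction that drives the paper's argument, and your quantitative bookkeeping does not actually close. The paper explicitly observes that to compare the first component of $Y^\eps$ with a one-dimensional Brownian motion one must time-change by
$T_t$ as in \eref{e:timechange}, and that there is no uniform \emph{lower} bound on the rate $\sum_i(\delta_{1i}+\d_i g_1)^2$. Consequently, the Brownian occupation-density estimate $\E\int_0^\tau \one_{(-\delta,\delta)}(B_s)\,ds \le C\delta$ that you invoke cannot be transferred to $Y^\eps$ or $X^\eps$ directly: a bound on the Brownian clock does not control the original clock. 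You do not address this, and it is precisely the point the proof has to work around.

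The paper resolves this by splitting $\I_\delta$ into an inner band $\I_{c\eps}$ and two outer strips. For the inner band it introduces a \emph{modified} corrector $\tilde g$ (periodic, equal to $g$ away from a band around the interface, and chosen so that $\tilde g(\eps^{-1}X^\eps)=0$ on the relevant set); this forces the time-change derivative to equal $1$ exactly where it matters, so the worst-case comparison of Lemma~\ref{lem:boundwc} applies and yields $\CO(\eps)$. For the outer strips, where the drift vanishes, it sets up a separate excursion decomposition (the $\sigma'_n,\phi'_n$) whose resolvent-weighted count is $\CO(1/\eps)$ (not the $\CO(1/\delta)$ you quote, which is the count in Theorem~\ref{maintheoremm} for a different pair of stopping times) and uses Lemma~\ref{lem:boundexit} — a multiscale/PDE estimate on the exit time of the rescaled periodic process, not a Brownian occupation-density bound — to get $\sup_x\E_x\sigma'_0 = \CO(\delta^2)$, hence $\CO(\delta^2/\eps)=\CO(\eps^{2\alpha-1})\to 0$. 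Your ``$\delta\cdot(1/\delta)\cdot(\text{something})$'' bookkeeping is, as you yourself flag, an $\CO(1)$ quantity, and nothing in your sketch identifies the extra small factor; both the modified-corrector device and Lemma~\ref{lem:boundexit} are essential ingredients that your proposal does not supply.
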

\begin{pf}
Again, it suffices to show the bound for the process $Y^\eps$ since it
differs from $X^\eps$ by $\CO(\eps)$.
We would like to use an argument similar to what can be used in the
one-dimensional case \cite{1}, that is, we
time-change the corrected process $Y^\eps$ in such a way that it
becomes a diffusion with
diffusion coefficient $1$. Its drift then vanishes outside of the
interface and is
bounded by $K/\eps$ for some $K>0$. At this stage, one compares this
process to the ``worst-case scenario'' process
$Z^\eps$ given by
\[
dZ^\eps= \hat b(Z^\eps) \,dt + dB(t) ,
\]
where the drift $\hat b$ is given by
\[
\hat b(z) =
\cases{
-K\eps^{-1}, &\quad if $z \in[0,l\eps)$, \cr
K\eps^{-1}, &\quad if $z \in(-l\eps,0)$, \cr
0, &\quad otherwise,}
\]
for some $l\in\mathbb{R}$. It can then be shown that $Z^\eps$ spends
more time in the interface than $Y^\eps$ does, so that the requested
bound can be obtained from a simple calculation.

The problem with this argument is that in the multi-dimensional case
the time-change required to turn the
first component of $Y^\eps$ into a diffusion with unit diffusion
coefficient is given by
%
%
\begin{equation}\label{e:timechange}
T_t = \inf\Biggl\{s\in\mathbb{R}_{+}\dvtx\int_{0}^{s}\sum
_{i=1}^{n} \bigl(\delta_{1i}+\,\partial_{i}g_{1}(\eps
^{-1}X^{\eps}(u)) \bigr)^{2}\,du>t \Biggr\} .
\end{equation}
We do not know of an argument giving a uniform bound \textit{from
below} on the quantity appearing under
the integral in this expression. Therefore, an upper bound on the time
spent by the process $Z^\eps$ in
the interval
$(-\delta, \delta)$ does not give us any control on the time spent by
$Y^\eps$ (and therefore $X^\eps$)
in that interval.

Because of this, we modify our argument in the following way. We break
up the integral in (\ref{2.2}) as
%
%
\begin{eqnarray}\qquad
\label{A3}
\mathbb{E}_{x} \biggl[\int_{0}^{\infty}e^{-\lambda t}\one_{(-\delta
, \delta)}(Y_{1}^{\eps}(t)) \,dt \biggr]
&=& \mathbb{E}_{x} \biggl[\int_{0}^{\infty}e^{-\lambda t}\one
_{(-c\eps, c\eps)}(Y_{1}^{\eps}(t)) \,dt \biggr]
\\
\label{A2}
&&{} + \mathbb{E}_{x} \biggl[\int_{0}^{\infty}e^{-\lambda t}\one
_{(-\delta,-c\eps)}(Y_{1}^{\eps}(t)) \,dt \biggr]
\\
&&{} + \mathbb{E}_{x} \biggl[\int_{0}^{\infty}e^{-\lambda
t}\one_{(c\eps, \delta)}(Y_{1}^{\eps}(t)) \,dt
\biggr] ,\nonumber
\end{eqnarray}
where $Y_{1}^{\eps}$ is the first component of $Y^{\eps}$ and $c$ is
a value to be determined. By symmetry, the last two terms are of the
same order, so that it is sufficient to bound the first two terms.
In order to bound the first term, we use the argument outlined above,
but we replace $Y^\eps$ by the
process $\tilde Y^\eps$ given by $\tilde Y^\eps(t) = X^\eps(t) +
\eps\tilde g(\eps^{-1} X^\eps(t))$, where the
corrector $\tilde g$ has the following properties:
\begin{enumerate}
\item The function $\tilde g(x)$ is smooth, periodic in the
variables parallel to the interface, and equal to $g(x)$ for $x
\notin\I_{c_1}$ for some $c_1$.
\item One has the implication $Y^\eps\in\I_{c\eps} \Rightarrow
\tilde Y^\eps\in\I_{c_2\eps}$ for some $c_2 < c_1$.
\item If $\tilde Y^\eps\in\I_{c_2\eps}$, then $\tilde g(\eps
^{-1}X^\eps) =0$.
\end{enumerate}
It is always possible to satisfy these properties by choosing $c_1$
sufficiently large and setting $g=0$
in a sufficiently wide band around the interface. We now set $\tilde
Z(t) = \tilde Y(\tilde T_t)$, where $\tilde T_t$ is defined as in (\ref
{e:timechange}), but with $g$ replaced by $\tilde g$, so that it follows
from the second property
that one has the bound
\begin{eqnarray*}
\mathbb{E}_{x}\int_{0}^{\infty}e^{-\lambda t}\one_{(-c\eps, c\eps
)}(Y_{1}^{\eps}(t)) \,dt
&\le&\mathbb{E}_{x}\int_{0}^{\infty}e^{-\lambda t}\one_{(-c_2\eps
, c_2\eps)}(\tilde Y_{1}^{\eps}(t)) \,dt \\
&\le&\mathbb{E}_{x}\int_{0}^{\infty}e^{-\lambda T_t}\one_{(-c_2\eps
, c_2\eps)}(\tilde Z_1^{\eps}(t)) \,dT_t .
\end{eqnarray*}
At this stage, we remark that since the function $\tilde g$ has bounded
derivatives, there exists a constant
$K_1$ such that $T_t \ge K_1 t$ almost surely. On the other hand, it
follows from the last property that one actually has $dT_t = \,dt$
whenever $\tilde Y^\eps\in\I_{c_2 \eps}$, so that this expression
is bounded by
\[
\mathbb{E}_{x} \int_{0}^{\infty}e^{-K_{1}t}\one_{(-c_2\eps,
c_2\eps)}(\tilde Z_1^{\eps}(t)) \,dt .
\]
This expression in turn can be bounded by $\CO(\eps)$ just as in
\cite{1}.

We now proceed to bounding the term (\ref{A2}). For this, let us first
introduce a constant $c_3 < c$
and make $c$ from (\ref{A3}) sufficiently large such that:
\begin{enumerate}[4.]
\item[4.] The implication $X^{\eps}(t) \in\I_{c_3 \eps}
\Rightarrow Y^{\eps}(t) \in\I_{c \eps}$ holds.
\item[5.] One has $c_3 > \eta+1$.
\end{enumerate}

Then, we define a series of stopping times $\{\phi'_{n}\}_{n}$ and $\{
\sigma'_{n}\}_{n}$ recursively
by $\phi'_{-1} = 0, \ldots, \sigma'_{n} = \inf\{t\geq\phi
'_{n-1}\dvtx X_{1}^{\eps}(t) \notin(-2\delta,-c_3\eps+\eps)\}$ and
$\phi'_{n} = \inf\{t \ge\sigma'_{n}\dvtx X^{\eps}_{1}(t)\in(-\delta
,-c_3\eps)\}$.

Now we can use the strong Markov property as in \cite{MR1245308},
Lemma 4.1, with the stopping times $\phi'_{n}$ to obtain the bound
$\mathbb{E}_{x}[ \sum_{n=0}^{\infty}e^{-\lambda\sigma
'_{n}(\eps)}] = O(\frac{1}{\eps})$, uniformly in
the initial point $x$ for $x\in\{x\dvtx
x_{1}=-c_3\eps+\eps\}\cup\{x\dvtx
x_{1}=-2\delta\}$. This is a consequence of the fact that $\mathbb
{E}_{x}[e^{-\lambda\sigma'_{0}}] = 1-O(\eps)$ uniformly.
Furthermore, it follows from the definition of these stopping times,
property 4 and the strong Markov property that (\ref{A2}) is bounded by
%
%
\begin{eqnarray}\label{A5}\qquad
\mathbb{E}_{x} \int_0^\infty e^{-\lambda t} \one_{(-\delta,-c_3\eps
)}(X_{1}^{\eps}(t)) \,dt
&\le&\mathbb{E}_{x} \sum_{n\ge0}\int_{\phi'_{n-1}}^{\sigma'_{n}}
e^{-\lambda t} \,dt \nonumber\\
&\le&\lambda^{-1}\mathbb{E}_{x}\sum_{n\ge0}e^{-\lambda\phi
'_{n-1}}(\sigma'_n - \phi'_{n-1})\nonumber\\[-8pt]\\[-8pt]
&\le&\lambda^{-1} \Biggl(\mathbb{E}_{x}\sum_{n=0}^{\infty
}e^{-\lambda\phi'_{n}(\eps)} \Biggr) \sup_x \E_x \sigma'_0\nonumber\\
&\le&{C\over\eps\lambda} \sup_x \E_x \sigma'_0 .\nonumber
\end{eqnarray}
It follows that it suffices to be able to choose $\delta$ in such a
way that $\E_x \sigma'_0$
is $o(\eps)$ uniformly in the initial point.
Specifically, we will show that (\ref{A5}) is $\CO(\delta^{2})$, so
that the claim follows.

This will be a consequence of the following result.
\begin{lemma}\label{lem:boundexit}
Let $X^-$ be as in (\ref{e:defXpm}) and define $X^{-,\eps}(t) = \eps
X^-(\eps^{-2}t)$.
Let $\tau= \inf\{t>0 \dvtx X^{-,\eps}_1(t) \notin[-1,0]\}$. Then,
there exists a constant $C$ such that
\[
\E_x \tau\le C ,
\]
independently of $\eps\in(0,1]$ and independently of $x \in\R^d$.
\end{lemma}

Before we prove Lemma \ref{lem:boundexit}, we use it to complete the
proof of Lemma \ref{lem:timeinterface}.
It follows from property 5. that up to time $\sigma'_{0}$, the process
$X^\eps$ is identical in law
to the process $X^{-,\eps}$. Furthermore, the stopping time $\sigma
'_0$ is certainly bounded
from above by the first exit time
of the first component of $X^{-,\eps}$ from $(-2\delta,0)$. Rescaling
space by a factor $2\delta$ and rescaling time correspondingly by
$4\delta^2$, we deduce from Lem\-ma~\ref{lem:boundexit} that $\E\sigma
'_0 \le4C\delta^2$, uniformly
in the initial condition as required.
\end{pf}

We now turn to the proof of the lemma.
\begin{pf*}{Proof of Lemma \ref{lem:boundexit}}
Denote by $U$ the region $\{x \in\R^d \dvtx x_1 \in[-1,0]\}$ and
define $f^\eps$ by $f^\eps(x) = \E_x \tau$. Then
$f^\eps$ satisfies
\[
\CL^{\eps}f^{\eps}=-1,\qquad
f^{\eps}(x) = 0 \qquad\mbox{for $x\in\partial U$,}
\]
where $\CL^\eps= {1\over2}\Delta+ \eps^{-1} b_-(\eps^{-1}\cdot
)\nabla_x$.
In order to obtain a bound on $f$, we will give a uniformly bounded
(uniformly over $\eps$) function $g^{\eps}$ such that it satisfies
%
%
\begin{equation} \label{geps}
\CL^{\eps}g^{\eps} = -1,\qquad g^{\eps}(x)\geq0 \qquad\mbox{for $x\in
\partial U$.}
\end{equation}
It then follows from the strong maximum principle (which we can apply
since our diffusion is periodic
in the directions in which $U$ is unbounded) that $g^\eps\ge f^\eps$,
so that the requested bound holds.


We use a standard multiscale expansion for $g^\eps$ of the form
\[
g^\eps= g_0 + \eps g_1 + \eps^2 g_2 .
\]
Now to find such a $g^{\eps}$. We proceed by starting off with a
constant order term, that is, the typical term one would expect for the
escape time if we were dealing with a Brownian motion, then removing
the order $\frac{1}{\eps}$ terms that arise when the operator
$L^{\eps}$ acts on the constant order term by adding an order $\eps$
term. Then finally we add an order $\eps^{2}$ term to remove the
constant order terms that are produced by the action of $L^{\eps}$ on
the order $\eps$ term. Incidentally, this approach of correction works
exactly with the maximum order term in $\eps$ being 2 and produces a
series of terms that are known and have the right properties to provide
a uniform bound.

Taking guidance from the fact that the homogenized process is given by
Brownian motion, we make the ansatz
$g_0(x) = C_2 - C_1 x_1 (1+x_1)$, for $C_1$ and $C_2$ two constants to
be determined.
Applying $\CL^{\eps}$ to $g_0$ yields
\[
\CL^{\eps}g_0(x) = -C_{1}-\frac{C_1}{\eps}b_{-,1} \biggl(\frac
{x}{\eps} \biggr)(1+2x_1)
\]
for $b_{-,1}$ the first component of $b_{-}$. Our aim now is to choose
$g_1$ in such a way that $\CL g_1$ contains a term of order $\eps
^{-1}$ that
precisely cancels out the second term in this expression. Denote as in
the introduction by $g_-$ the unique
centered solution to the Poisson equation
%
%
\begin{equation}\label{e:Poisson}
\CL g_{-}=b_{-} ,
\end{equation}
where $\CL= {1\over2}\Delta+ b_-\nabla_x$ is the generator for the
nonrescaled process.
We then set $g_1(x) = C_1 (1+2x_1) g_{-,1}(\eps^{-1}x)$, where
$g_{-,1}$ is the first component of $g_{-}$, and we note that
%
%
\begin{eqnarray}\label{A12}
\eps\CL^{\eps}g_1(x) &=& \frac{C_1}{\eps} b_{-,1} \biggl(\frac
{x}{\eps} \biggr)(1+2x_{1})\nonumber\\
&&{}
+ 2C_{1}b_{-,1} \biggl(\frac{x}{\eps} \biggr)g_{-,1} \biggl(\frac
{x}{\eps} \biggr) + 2C_{1}\frac{\partial g_{-,1}}{\partial x_{1}}
\biggl(\frac{x}{\eps} \biggr)\\
& = &\frac{C_1}{\eps} b_{-,1} \biggl(\frac{x}{\eps} \biggr)(1+2x_{1})
+ C_1 F\biggl(\frac{x}{\eps}\biggr) ,\nonumber
\end{eqnarray}
for some periodic function $F$ independent of $\eps$ and of $C_1$.
The term involving $F$ appearing in this expression is still of order
one, so we aim to compensate it
by a judicious choice of $g_2$. It is not necessarily centred with
respect to the invariant measure
$\mu$ of our process, but there exists a periodic centred function $h$
such that
\begin{eqnarray*}
\CL h &=& F - K ,\\
K &=& \int F(x) \mu(dx) = - \int|\nabla
g_{-,1}(x)|^2 \mu(dx) + 2\int{\partial g_{-,1} \over\partial
x_1} \mu(dx) .
\end{eqnarray*}
Finally, setting $g_2(x) = -h(\eps^{-1}x)$, we obtain
%
%
\begin{equation}\label{A13}
\CL^{\eps}g^{\eps} = C_{1}(K-1) = -C_1 \int|e_1 - \nabla
g_{-,1}(x)|^2 \mu(dx) .
\end{equation}
Since the integral is strictly positive, the right-hand side can be
made to be equal to $-1$.
Furthermore, since the corrector terms $\eps g_1 + \eps^2 g_2$ are
uniformly bounded for $\eps< 1$,
it is straightforward to find a constant $C_2$ that ensures that $g(x)
\ge0$ for $x \in\partial U$, thus concluding
the proof.
\end{pf*}
%

\section{Computation of the transmissivity coefficient}
\label{sec:ppm}

The aim of this section is to prove that
the following proposition holds.
\begin{proposition}\label{probabilityconverge}
The identity (\ref{2.3}) holds for the family of processes $X^\eps$
in Section \ref{sec:mainresult} with
$p_\pm$ given by (\ref{whichsideitescapes}).
\end{proposition}

Let us first introduce some notation. Given a starting point $x \in\I
_\eta$, we set
$p_+^{x,k} = \PP_x (X(\tau^{(k)}) > 0)$, and similarly for
$p_-^{x,k}$, where $\tau^{(k)}$ is the first hitting time of
$\partial\I_k$. We furthermore set
\[
\bar p_+^k = \sup_{x \in\I_\eta}p_+^{x,k} ,\qquad \underline p_+^k
= \inf_{x \in\I_\eta}p_+^{x,k} ,\qquad
p_+^{(k)} = {1\over2} (\bar p_+^k + \underline p_+^k) ,
\]
and similarly for $p_-$. It is clear that Proposition \ref{probabilityconverge}
follows if we can show that
$p_+^k$ converges to a limit satisfying (\ref{whichsideitescapes}) and
$\bar p_+^k - \underline p_+^k \to0$ as $k \to\infty$.

We will first show the latter, as it is relatively straightforward to
show. In order to show the convergence
of $p_+^k$, our main ingredient will be to show that the invariant
measure $\mu(dx)$ for the process $X$
looks more and more similar to $\mu_\pm(dx)$ as $x_1 \to\pm\infty
$. Note that in this whole section,
we will always consider $X$ and $X_{\pm}$ as processes on $\R\times
\T^{d-1}$, obtained by identifying points
$(x,y)$ such that $x_1 = y_1$ and $x_j - y_j \in\Z$ for $j \ge2$.
With this interpretation, the interface is compact
and we will show that the processes are recurrent. If we were to
consider them as processes in $\R^d$,
they would \textit{not} be recurrent for $d \ge3$.

Before we show that indeed $\bar p_+^k - \underline p_+^k \to0$, we
obtain some recurrence properties of $X$ and ensure that it visits any
open set in
$\I_\eta$ sufficiently often before the hitting time $\tau^{(k)}$.
%
%
\begin{lemma}\label{lemgamma}
Fix a neighborhood $\gamma\subset\I_{\eta}$. Then the probability
for $X$ to enter
$\gamma$ before hitting $\partial\I_k$, starting from an arbitrary
initial point in $\I_{\eta}$ tends to
$1$ uniformly as $k \rightarrow\infty$. In particular, the process
$X$ is recurrent.
\end{lemma}

Our first step in showing this result is to argue that if the process
starts at distance $\CO(1)$ of the interface,
then it will return to the interface with overwhelming probability
before exiting $\I_k$.
\begin{lemma}\label{lemgam}
There exists\vspace*{1pt} $K>0$ such that the probability, starting at $x$, for $X$
to return to $\I_{\eta}$
before hitting $\partial\I_k$, is bounded from above by $1 - {x-K
\over k}$ and from below by $1 - {x + K\over k}$.
\end{lemma}
\begin{pf}
Denote by $f^k(x)$ the probability of hitting $\I_{\eta}$ before
$\partial\I_k$, starting from $x$.
We assume without loss of generality that $x_1 > 0$, since the case
\mbox{$x_1 < 0$} follows using the same argument.
The function $f^k$ then satisfies the equation $\CL f^k = 0$, endowed
with the boundary conditions
$f^k(x) = 1$ if $x_1 = \eta$ and $f^{k}(x) = 0$ if $x_1 = k$. As in
the proof of Lemma \ref{lem:boundexit},
we aim to construct a function $g^k$ satisfying $\CL g^k = 0$ and such
that either $g^k (x) \le f^k(x)$
on the two boundaries or $g^k (x) \ge f^k(x)$ on the two boundaries.
The claim then follows from the maximum principle.

Let $g_+$ be as in (\ref{e:Poisson}) and set
\[
g^k(x) = 1 - k^{-1}\bigl(K + x_1 - g_{+,1}(x)\bigr),
\]
for some constant $K$ to be determined.
It is straightforward to check that $g^k$ does indeed satisfy $\CL g^k
= 0$,
as well as the required inequalities on the boundary, provided that $K$
is either sufficiently large
or sufficiently small.
This concludes the proof.
\end{pf}

We now use the result of Lemma \ref{lemgam} to prove Lemma \ref
{lemgamma}. This is done using the
strong Markov property in conjunction with success/failure trials.
\begin{pf*}{Proof of Lemma \ref{lemgamma}}
Consider the two hyperplanes that delimit $\I_{\eta}$ and two further
hyperplanes at distance $m$ from $\I_{\eta}$, with $m$ a sufficiently
large constant to be determined later. We then break the process into
excursions from $\partial\I_{\eta}$
to $\partial\I_{\eta+ m}$ and back.

More precisely, we define two sets of stopping times $\{\sigma
^{m}_{n}\}_{n}$ and $\{\phi^{m}_{n}\}_{n}$ recursively by $\sigma
^{m}_1 = \inf\{t\geq0\dvtx X(t)\in\partial\I_{\eta+m}\}, \ldots
, \phi^{m}_{n}=\inf\{t>\sigma^{m}_{n}\dvtx X(t)\in\I_\eta\}$,
$\sigma^{m}_{n+1} = \inf\{t>\phi^{m}_{n}\dvtx X(t)\in\partial\I
_{\eta+m}\}$. We furthermore denote by $\F_n$ the $\sigma$-algebra
generated by trajectories of $X$ up to the time $\phi^m_n$ and by
$\bar\F_n$ the $\sigma$-algebra generated by trajectories of $X$ up
to the time $\sigma^m_{n+1}$.
We also denote by $\tau_\gamma$ the first hitting time of the set
$\gamma$ and by $\tau^{(k)}$ the first hitting time of the set
$\partial\I_k$.

It follows from the ellipticity of $X$ and the resulting smoothness of
its transition
probabilities that there exists some $p>0$ such that $\inf_{x \in
\partial\I_\eta} P_1(x,\gamma) = 2p > 0$. Furthermore, it is
straightforward, for instance using a comparison argument with a
process with constant drift away from the interface and using the
continuity of paths, to show that
%
%
\begin{equation}\label{e:probhit}
\lim_{m \to\infty} \sup_{x \in\I_\eta} \PP_x(\sigma^{m}_1 \le
1) = 0 .
\end{equation}
It follows that we can choose $m$ large enough so that the probability
appearing in (\ref{e:probhit}) is bounded
above by $p$. As a consequence, for such a choice of $m$, one has the
almost sure bound
%
%
\begin{equation}\label{e:boundX1}
\PP(\tau_\gamma< \sigma^m_{n+1} | \F_n) \ge p .
\end{equation}
On the other hand, it follows from Lemma \ref{lemgam} that the
probability that the process hits
$\partial\I_k$ between $\sigma^{m}_n$ and $\phi^{m}_n$ is bounded
from \textit{above} uniformly
by $\beta_k = \CO(k^{-1})$ so that, almost surely,
%
%
\begin{equation}\label{e:boundX2}
\PP\bigl(\tau^{(k)} < \phi^m_{n+1} | \bar\F_n\bigr) \le\beta
_k .
\end{equation}
Note furthermore that by construction the event appearing in (\ref
{e:boundX1}) is $\bar\F_n$-measurable.

Denote now by $Y_n$ a Markov chain with states $\{-1,0,1\}$ such that
$\{\pm1\}$ are absorbing and
such that $P(Y_{n+1} = -1 | Y_n = 0) = p$, $P(Y_{n+1} = 1 | Y_n =
0) = \beta_k$. As a consequence of
(\ref{e:boundX1}) and (\ref{e:boundX2}), it is then possible
to couple $Y$ and $X$ in such a way that the following two implications
hold almost surely:
\begin{eqnarray*}
\{Y_n = 0 \mbox{ and } Y_{n+1} = -1\} &\Rightarrow& \bigl\{\phi^m_n <
\tau_\gamma< \sigma^m_{n+1}<\tau^{(k)}\bigr\}, \\
\bigl\{\sigma^m_{n+1} < \tau^{(k)} < \phi^m_{n+1}<\tau_{\gamma}\bigr\}
&\Rightarrow& \{(Y_n = 0 \mbox{ and } Y_{n+1} = 1)\}.
\end{eqnarray*}
It follows that the probability of entering $\gamma$
before the hitting time $\tau^{(k)}$ is bounded from below by
\[
\PP\bigl(\tau_\gamma< \tau^{(k)}\bigr) \ge\PP\Bigl(\lim_{n \to\infty} Y_n = -1\Bigr)
= {p \over p+\beta_k} .
\]
Since $p$ is fixed and $\beta_k = \CO(k^{-1})$, this quantity can be
made arbitrarily close to $1$.

This shows that the set $\gamma$ is recurrent for $X$. Since
furthermore $X$ has transition probabilities that have strictly positive
densities with respect to Lebesgue measure (as a consequence of the
ellipticity of the equations describing it), recurrence follows
from \cite{MR1287609}, Theorem 8.0.1.
%
\end{pf*}

We now use this result to prove
the following proposition.
\begin{proposition}\label{converges} $\bar p_{+}^{k}-\underline p_{+}^{k}
\rightarrow0$ as $k\to\infty$.
\end{proposition}
\begin{pf}
The idea is to use the fact that, before the process
exits $\I_k$, it has had sufficient amount of time to forget about its
initial condition by visiting a small set on which a strong minorizing
condition holds for its transition probabilities.

Fix a value $\beta> 0$. Our aim is to show that there then exists $k_0
> 0$ such that
\[
\underline p_\pm^{k} \ge p_\pm^{0,k} - \beta,
\]
say, for every $k \ge k_0$. Since $p_+^{x,k} = 1-p_-^{x,k}$, the claim
then follows.
We restrict ourselves to the bound for $p_+$ since the other bound can
be obtained in exactly the same way.

The argument is now the following. It follows from the smoothness of
transition probabilities that there exists
a neighborhood $\gamma$ of the origin such that the transition
probabilities at time $1$ for $X$,
starting from $\gamma$ satisfy the
lower bound
\[
\rho(y) = \inf_{x \in\gamma} P_1(x,y) ,
\]
with $\int_{\R\times\T^{d-1}} \rho(y) \,dy \ge1-\beta/2$. It then
follows immediately that
for $x \in\gamma$, one has $p_+^{x,k} \ge p_+^{0,k} - \beta/2 - \PP
_x(\exists t \le1 \dvtx X(t) \in\partial\I_k)$. For
arbitrary $x$, it therefore follows from the strong Markov property that
\[
p_+^{x,k} \ge p_+^{0,k} - \beta/2 - \sup_{y \in\gamma} \PP_y
\bigl(\exists t \le1 \dvtx X(t) \in\I_k\bigr) -
\PP_x (\mbox{$X$ hits $\partial\I_k$ before $\gamma$}
) .
\]
The last term can be made smaller than $\beta/ 4$ by Lemma \ref
{lemgamma}. The remaining term
$\PP_y(\exists t \le1 \dvtx X(t) \in\I_k)$ on the other
hand was already shown to be arbitrary
small in (\ref{e:probhit}).
\end{pf}

We next show that the invariant measure of the process converges to
that of the relevant periodic process with increasing distance from the
interface.
\begin{proposition}\label{invariantconv}
Let $A$ denote a bounded measurable set and denote by $\mu$ the
(unique up to scaling)
invariant $\sigma$-finite measure of the process $X$. Denote
furthermore by $\mu_{\pm}$ the invariant measure of the relevant
periodic process, normalized in such a way that $\mu_\pm
([k,k+1]\times\T^{d-1}) = 1$ for
every $k \in\Z$. Then there exist normalization constants $q_\pm$
such that
%
%
\begin{equation}\label{e:convA}
\lim_{k \to\infty} \bigl(|\mu(A + k)-q_+\mu_{+}(A)| + |\mu(A -
k)-q_-\mu_{-}(A)|\bigr) =0 .
\end{equation}
(Here $k$ is an integer.) Furthermore, this convergence is exponential,
and uniform over the set $A$ if we restrict its diameter.
\end{proposition}
\begin{remark}
We used the shorthand notation $A+k$ for $\{x+k \dvtx x\in A\}$.
\end{remark}
\begin{pf*}{Proof of Proposition \ref{invariantconv}}
We restrict ourselves to the estimate of $\mu(A+k)$, since the one on
$\mu(A-k)$ is similar. For fixed $k \ge0$, we introduce the sequence
of stopping times given by $\phi^{(k)}_0 = \inf\{t \ge0 \dvtx X_1(t)
= k\}$ and then recursively
$\sigma^{(k)}_n = \inf\{t \ge\phi^{(k)}_n \dvtx |X_1(t) - k| = 1\}$,
$\phi^{(k)}_{n+1} = \inf\{t \ge\sigma^{(k)}_n \dvtx X_1(t) = k\}$.
This allows us to define an embedded Markov chain $Z^{(k)}$ on $\T
^{d-1}$ by setting
$Z^{(k)}_n = \Pi X(\phi^{(k)}_n)$, where $\Pi(x,y) = y$ for $(x,y)
\in\R\times\T^{d-1}$.

We similarly define an embedded Markov chain $Z$ for the process $X^+$.
(By periodicity of $X^+$, the choice of $k$ is unimportant for the law
of $Z$, so that we drop its dependence of $k$.)
Denote by $\pi^{(k)}$ the invariant measure for $Z^{(k)}$ and by
$\pi$ the invariant measure for $Z$. We then define $\sigma$-finite
measures $\mu_+$ and $\mu^{(k)}$ on
$\R\times\T^{d-1}$ through the identities
%
%
\begin{eqnarray}
\label{e:muA}
\mu^{(k)}(B) &=& \int_{\T^{d-1}} \E_{x + ke_1} \int_0^{\phi
_1^{(k)}} \one_{B}(X(s)) \,ds\, \pi^{(k)}(dx) ,\\
\label{e:mu+A}
\mu_+(B) &=& \int_{\T^{d-1}} \E_{x + ke_1} \int_0^{\phi_1^{(k)}}
\one_{B}\bigl(X^+(s)-k\bigr) \,ds\, \pi(dx) .
\end{eqnarray}
[Here and below we make a slight abuse of notation and identify
elements $x \in\T^{d-1}$ with the element $(0,x) \in\R\times\T^{d-1}$.]
It follows from \cite{MR0133871}, Theorem 2.1, that $\mu^{(k)}$ is
invariant for the process $X$ and
$\mu_{+}$ is invariant for $X^+$. Therefore, there exist constants
$c_k > 0$ such that $\mu^{(k)} = c_k \mu$ since the invariant measure
for $X$ is unique up to normalization.
Note that by translation invariance of $X^+$, $\mu_+$ does not depend
on $k$.

Note that we can assume without any loss of generality that $A \subset
\{x \dvtx x_1 > 0\}$
[it suffices to shift it by a finite number of steps to the right in
(\ref{e:convA})]. In this case, we can rewrite
(\ref{e:muA}) as
%
%
\begin{equation}\label{e:muAprime}
\mu^{(k)}(A+k) = \int_{\T^{d-1}} \E_{x + ke_1} \int_0^{\phi
_1^{(k)}} \one_{A}\bigl(X^+(s)-k\bigr) \,ds\, \pi^{(k)}(dx) .
\end{equation}
This is because $X(t) = X^+(t)$ for $t \le\sigma_1^{(k)}$ and, if
$X(\sigma_1^{(k)}) < k$, then
\[
\int_{\sigma_1^{(k)}}^{\phi_1^{(k)}} \one_{A}\bigl(X(s)-k\bigr) \,ds = 0,
\]
whereas if $X(\sigma_1^{(k)}) > k$, then
$X(t) = X^+(t)$ for $t \le\phi_1^{(k)}$. This shows that the claim
follows if we can show that
$\|\pi- \pi^{(k)}\|_\TV\to0$ as $k \to\infty$ and there exists a
constant $c_\infty$ such that $c_k \to c_\infty$.

Let us first show that the latter is a consequence of the former.
Setting $B_k = [k,k+1]\times\T^{d-1}$,
we have $c_{k+1} / c_k = \mu^{(k)}(B_{k+1}) / \mu^{(k+1)}(B_{k+1})$.
On the other hand a straightforward trial/error argument allows one to
show that
$\E_x \int_0^{\phi_1^{(0)}} \one_{A}(X^+(s)) \,ds$
is bounded uniformly over $x \in\T^{d-1}$.
It then follows immediately
from (\ref{e:muAprime}) that there exists a constant $C$ such that
\[
\bigl|\mu^{(k)}(B_{k+1}) - \mu(B_{0})\bigr| \le C\bigl\|\pi- \pi^{(k)}\bigr\|_\TV,
\]
and similarly for $|\mu^{(k+1)}(B_{k+1}) - \mu(B_{0})|$. It follows
that provided that $\sum_{k \ge0} \|\pi- \pi^{(k)}\|_\TV< \infty$,
one does indeed have $c_k \to c_\infty$.

Denote now by $P$ the transition probabilities for $Z$ and by $P^{(k)}$
the transition probabilities for $Z^{(k)}$.
Then, we can write $P = Q R$, where $R$ is the Markov kernel from $\T
^{d-1}$ to $\{-1,1\} \times\T^{d-1}$
given by $R(x,A) = \PP_{x} (X^+(\sigma_1) \in A)$ and $Q$ is the
Markov kernel from $\{-1,1\} \times\T^{d-1}$
to $\T^{d-1}$ given by $Q(x,A) = \PP_x (X^+(\phi_0) \in A)$ for
$X_{1}(0)=0$, $\sigma_1 = \inf\{t>0\dvtx|X_{1}(t)|=1\}$ and $\phi_1 =
\inf\{t>\sigma_1\dvtx X_{1}(t)=0\}$. Since the diffusion $X^+$ is
elliptic, both
$Q$ and $R$ are strong Feller and irreducible. It follows from the
Doeblin--Doob--Khas'minskii
theorem \cite{DPZ96}, Proposition 4.1.1, that $P(x,\cdot)$ and
$P(y,\cdot)$ are mutually equivalent for any $x,y \in\T^{d-1}$.
Furthermore, it follows from the Meyer--Mokobodzki theorem
\cite{DM83,Sei02,Hair07} that the map $x \mapsto P(x,\cdot)$ is
continuous in the total variation topology. We conclude that
the map $(x,y) \mapsto\|P(x,\cdot)-P(y,\cdot)\|_\TV$ reaches
its maximum and that this
is strictly less than $2$, so that $P$ satisfies Doeblin's condition.
It follows that there exists a constant $\eta< 1$ such that $P$ has
the contraction
property
\[
\|P\nu_1 - P\nu_2\|_\TV\le\eta\|\nu_1 - \nu_2\|_\TV,
\]
for any two probability measures $\nu_1$, $\nu_2$ on $\T^{d-1}$.
Therefore, if we can find constants
$\eps_k$ such that
%
%
\begin{equation}\label{e:wantedbound}
\sup_{x \in\T^{d-1}} \bigl\|P(x,\cdot) - P^{(k)}(x,\cdot)\bigr\|_\TV\le
\eps_k ,
\end{equation}
then we have
%
%
\begin{eqnarray}\label{e:boundTVIM}
\bigl\|\pi- \pi^{(k)}\bigr\|_\TV&\le&\bigl\|P\pi- P\pi^{(k)}\bigr\|_\TV+ \bigl\|P\pi^{(k)}
- P^{(k)}\pi^{(k)}\bigr\|_\TV\nonumber\\[-8pt]\\[-8pt]
&\le&\eta\bigl\|\pi- \pi^{(k)}\bigr\|_\TV+ \eps_k ,\nonumber
\end{eqnarray}
so that $\|\pi- \pi^{(k)}\|_\TV\le\eps_k/(1-\eta)$. The problem
thus boils down to obtaining (\ref{e:wantedbound})
for an exponentially decaying sequence $\eps_k$.

It follows from the
same calculation as in Lemma \ref{lemgam} that the probability that
$X$ reaches the interface $\I_\eta$
before time $\phi_1^{(k)}$ when started on the hyperplane $\{x_1 = k\}
$ is bounded from above by $\CO(1/k)$.
This yields the ``trivial'' bound $\eps_k \le\CO(1/k)$, which
unfortunately is not even summable.
However, a more refined
analysis allows to obtain Proposition \ref{prop:convpi} below, thus
concluding the proof.
\end{pf*}
\begin{proposition}\label{prop:convpi}
There exists a constant $\rho\in(0,1)$ such that $\eps_k \le\CO
(\rho^k)$.
\end{proposition}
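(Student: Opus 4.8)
The plan is to obtain an exponentially decaying bound on $\eps_k = \sup_{x\in\T^{d-1}}\|P(x,\cdot)-P^{(k)}(x,\cdot)\|_\TV$ by decomposing the embedded transition kernel $P^{(k)}$ according to whether, on the excursion from the hyperplane $\{x_1=k\}$ back to itself, the trajectory $X$ ever enters the interface $\I_\eta$. On the complement of that event, $X$ agrees in law with $X^+$, so the two kernels produce identical contributions; the discrepancy is therefore controlled by the probability $r_k$ of reaching $\I_\eta$ before $\phi_1^{(k)}$, which gives $\eps_k \le 2 r_k$. As noted at the end of the proof of Proposition~\ref{invariantconv}, the naive estimate from Lemma~\ref{lemgam} only yields $r_k = \CO(1/k)$, which is not summable, so the point is to do better by iterating.

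First I would set up the geometric decay directly. Write $\tau_\eta$ for the first hitting time of $\I_\eta$ and observe that, conditionally on an excursion from $\{x_1=k\}$ not yet having reached $\I_\eta$ by the time it returns to $\{x_1 = k\}$ (i.e.\ at time $\phi_1^{(k)}$), the process is, by the strong Markov property and translation invariance of $X^+$, in exactly the same situation it was in at the start — modulo the fact that once $X$ enters the region $x_1 < \eta$ it may differ from $X^+$, but to \emph{reach} $\I_\eta$ it must first cross every hyperplane $\{x_1 = j\}$ for $\eta < j < k$ where the law is still that of $X^+$. So I would instead estimate, for the periodic process $X^+$ started on $\{x_1 = k\}$, the probability $q_k$ of hitting $\{x_1 = \eta\}$ before returning to $\{x_1 = k\}$ after one down-excursion; a harmonic-function/maximum-principle argument exactly as in Lemma~\ref{lemgam} (using the corrector $g_{+,1}$ to build a function annihilated by $\CL$ with the right boundary data) shows this is $\CO(1/(k-\eta))$, i.e.\ $q_k \le C/k$. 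The key structural point: to travel from $\{x_1=k\}$ all the way down to $\I_\eta$, the first component of $X$ (which coincides with that of $X^+$ in this region) behaves like a martingale-plus-corrector, and by the strong Markov property applied at successive visits to the hyperplanes $\{x_1 = k-1\}, \{x_1 = k-2\}, \dots$, the probability of descending $m$ levels before returning up is a product of conditional probabilities, each bounded away from $1$; a Gambler's-ruin / optional-stopping computation on the (near-)harmonic coordinate $x_1 - g_{+,1}$ then gives $r_k = \P_x(\tau_\eta < \phi_1^{(k)}) \le \CO(\rho^{k})$ for some $\rho < 1$, or more precisely $r_k$ decays like the probability that a mean-zero random walk with $\CO(1)$ steps started at height $k$ reaches height $\eta$ before height $k+1$, which one estimates by a supermartingale of the form $e^{-c(x_1 - g_{+,1}(x))}$ with $c$ small. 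Applying the optional stopping theorem to this exponential supermartingale between the boundaries $\{x_1 = \eta\}$ and $\{x_1 = k+1\}$ yields precisely a bound of order $\rho^{k}$ with $\rho = e^{-c}$.

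Concretely, the steps are: (i) reduce $\eps_k$ to $2 r_k$ via the coupling of $X$ and $X^+$ away from $\I_\eta$; (ii) reduce $r_k$ to the analogous quantity for the purely periodic process $X^+$, since the descent from $\{x_1=k\}$ to $\I_\eta$ never sees the interface region; (iii) build the corrected coordinate $\psi(x) = x_1 - g_{+,1}(x)$, which satisfies $\CL_+ \psi = 0$ and $\psi(x) = x_1 + \CO(1)$ uniformly, and check that $e^{-c\psi}$ is a supermartingale for the generator $\CL_+$ when $c>0$ is small enough (here one uses that $\frac12|\nabla(c\psi)|^2 - \text{drift terms} \ge 0$, which holds for $c$ small since $\psi$ has bounded gradient); (iv) apply optional stopping at the exit from the slab $\{\eta < x_1 < k+1\}$ to get $\P_x(\text{hit } \{x_1=\eta\} \text{ first}) \le e^{-c(k - \text{const})}/(1 - e^{-c(k+1-\eta)}) = \CO(\rho^k)$. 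The main obstacle is step (iii): one must verify that the exponential supermartingale can indeed be constructed with a decay rate $c$ that is \emph{uniform} and positive, which requires the boundedness of $\nabla g_{+,1}$ (hence of $\nabla \psi$) together with the boundedness of the drift $b_+$; this is where smoothness and periodicity of $b_+$ enter. Once that uniform exponential supermartingale is in hand, the summability of $\eps_k$ — and hence the convergence $c_k \to c_\infty$ and the exponential rate claimed in Proposition~\ref{invariantconv} — follows immediately, and feeding $\eps_k \le \CO(\rho^k)$ back into \eref{e:boundTVIM} gives $\|\pi - \pi^{(k)}\|_\TV \le \CO(\rho^k)$ as well.
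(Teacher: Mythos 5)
Your reduction in step (i) to the quantity $r_k = \P_x(\tau_\eta < \phi_1^{(k)})$ is sound, but the claim in steps (iii)--(iv) that $r_k$ decays exponentially is false, and this is precisely where the whole argument collapses. The corrected coordinate $\psi(x) = x_1 - g_{+,1}(x)$ satisfies $\CL_+\psi = 0$, so for the periodic process $X_+$ the first coordinate is (after a bounded correction) a \emph{driftless} martingale. The optional-stopping / gambler's-ruin computation on $\psi$ between the levels $x_1 = \eta$ and $x_1 = k+1$ therefore yields a probability of order $1/(k-\eta)$ of hitting $\I_\eta$ before returning, \emph{not} $\rho^k$. Exponential decay of exit probabilities requires a drift pushing the process away from $\I_\eta$, which is exactly what is absent here. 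Indeed, this $\CO(1/k)$ estimate is the one the paper explicitly dismisses at the end of the proof of Proposition~\ref{invariantconv} as the ``trivial'' bound, insufficient because it is not summable; your steps (i)--(ii) simply re-derive it.

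The supermartingale in step (iii) is also constructed incorrectly. Since $\CL_+\psi = 0$, one has
\begin{equ}
\CL_+\bigl(e^{-c\psi}\bigr) = e^{-c\psi}\Bigl(-c\,\CL_+\psi + \tfrac{c^2}{2}\bigl|\nabla\psi\bigr|^2\Bigr) = e^{-c\psi}\,\tfrac{c^2}{2}\bigl|\nabla\psi\bigr|^2 \;\ge\; 0\;,
\end{equ}
so $e^{-c\psi}(X_+)$ is a \emph{sub}martingale for every $c \neq 0$, never a supermartingale. There is no small $c > 0$ making the sign work out; the exponential of any martingale is always a submartingale. More generally, no such exponential supermartingale can exist, because its existence would contradict the scaling of the homogenized limit (one-dimensional Brownian motion has $1/k$ descent probabilities, not exponentially small ones).

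The paper's actual mechanism is different and does not try to make $r_k$ small. Instead, it exploits that the discrepancy picked up by a descent to $\I_\ell$ is \emph{forgotten geometrically fast on the way back up}, through the Doeblin contraction of the purely periodic hitting kernel $Q_+$ from one hyperplane to the next. Setting $\alpha_k = \sup_x \|Q^{\ell,k}(x,\cdot\,) - Q_+^{k-\ell}(x,\cdot\,)\|_\TV$ and using the factorisations $Q^{\ell,k} = Q^{k-1,k}Q^{\ell,k-1}$ and $Q_+^{k-\ell} = Q_+Q_+^{k-\ell-1}$ together with $\|Q_+\nu_1 - Q_+\nu_2\|_\TV \le \bar\eta\|\nu_1-\nu_2\|_\TV$, one gets the recursion $\alpha_k \le (C/k)\alpha_k + \bar\eta\,\alpha_{k-1}$, hence $\alpha_k = \CO(\rho^k)$, and then $\eps_k \le (C/k)\alpha_k$ gives the claim. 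The exponential rate $\rho$ comes from the mixing constant $\bar\eta$ of $Q_+$ on the compact torus $\T^{d-1}$, not from any rarity of deep descents. Your argument never invokes this mixing and so cannot recover the exponential rate.
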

\begin{pf}
The intuitive idea behind the proof of Proposition \ref{prop:convpi}
is that if the process goes all the way
back to the interface then, by the time it reaches again the plane $\{
x_1 = k\}$, its hitting distribution
depends only very little on its behavior near the interface. In order
to formalize this, let us introduce
the Markov transition kernel $Q_+$ from $\T^{d-1}$ to $\T^{d-1}$
which is such that $Q_+(x,\cdot)$ is
the hitting distribution of the plane $\{1\}\times\T^{d-1}$ for the
process $X_+$ started at $(0,x)$.\vspace*{1pt}
Similarly, we denote by $Q^{\ell,k}(x,\cdot)$ the hitting
distribution of the plane $\{k\}\times\T^{d-1}$
for the process $X$ started at $(\ell,x)$.

For a fixed integer $\ell> \eta$, our aim is to show that $Q^{\ell
,k}(x,\cdot)$ gets very close to
$Q_+^{k-\ell}(x,\cdot)$. Here, we denote by $Q_+^k$ the $k$th
iteration of the Markov transition
kernel $Q_+$. With this notation at hand, define the quantities
\[
\alpha_k \equiv{\sup_{x \in\T^{d-1}}} \|Q^{\ell,k}(x,\cdot) -
Q_+^{k-\ell}(x,\cdot)\|_\TV.
\]
Note now that since, for fixed $\ell$, the probability that $X$
reaches the interface $\I_\ell$
before time $\phi_1^{(k)}$ when started on the hyperplane $\{x_1 = k\}
$ is bounded from above by $\CO(1/k)$,
we have
%
%
\begin{eqnarray}\label{e:boundepsk}
\eps_k & \le &{\sup_{x \in\T^{d-1}} }\|Q^{k-1,k}(x,\cdot) -
Q_+(x,\cdot)\|_\TV\nonumber\\[-8pt]\\[-8pt]
& \le &{{C\over k} \sup_{x \in\T^{d-1}}} \|Q^{\ell,k}(x,\cdot) -
Q^{k-\ell}_+(x,\cdot)\|_\TV\le{C\over
k}\alpha_k ,\nonumber
\end{eqnarray}
so that it suffices to obtain an exponentially decaying bound on the
$\alpha_k$'s.

We now look\vspace*{1pt} for a recursion relation on the $\alpha_k$'s which then
yields the required bound.
We have the identities $Q^{\ell,k} = Q^{k-1,k} Q^{\ell,k-1}$ and
$Q_+^{k-\ell} = Q_+ Q_+^{k-\ell-1}$.
It follows from the triangle inequality that one has the bound
%
%
\begin{eqnarray}\label{e:boundalphak}
\|Q^{\ell,k}\delta_x - Q_+^{k-\ell}\delta_x\|_\TV&\le&\|(Q^{k-1,k}
- Q_+)Q^{\ell,k-1}\delta_x\|_\TV\nonumber\\[-8pt]\\[-8pt]
&&{} + \|Q_+(Q^{\ell,k-1}\delta_x
-Q_+^{k-\ell-1}\delta_x)\|_\TV.\nonumber
\end{eqnarray}
At this stage, we note that by exactly the same reasoning as for $P$,
the kernel $Q_+$
satisfies Doeblin's condition. Therefore, there exists a constant $\bar
\eta< 1$ such that
\[
\|Q_+\nu_1 - Q_+\nu_2\|_\TV\le\bar\eta\|\nu_1 - \nu_2\|_\TV,
\]
for any two probability measures $\nu_1$, $\nu_2$. This and the
definition of $\alpha_k$
immediately implies that the second
term in (\ref{e:boundalphak}) is uniformly bounded by $\bar\eta
\alpha_{k-1}$.
On the other hand, it follows from (\ref{e:boundepsk}) that the first
term is bounded by ${C\over k}\alpha_k$,
so that
\[
\alpha_k \le{C\over k}\alpha_k + \bar\eta\alpha_{k-1} ,
\]
for some fixed constant $C$. The claim now follows at once.
\end{pf}

Finally, the last estimate that we need is the following. Denote by
$\tau$ the first hitting time of the interface
$\partial\I_\eta$ and fix an arbitrary smooth positive function
$\varphi$ that is supported
in the interval $[1,2]$. Set furthermore $\varphi_n^+(x) = n^{-2}
\varphi(n^{-1}x_1)$ and $\varphi_n^-(x) = n^{-2} \varphi(-n^{-1}x_1)$.
Then we have the following lemma.
\begin{lemma}\label{lem:convper}
With the above notation, setting $\bar\varphi= \int_1^2 \varphi
(x) \,dx$, we have
\[
\biggl|\E_x \int_0^\tau\varphi_n^\pm(X_{\pm}(t)) \,dt - {2\bar
\varphi\over D_{11}^\pm} \biggr| \to0 ,
\]
uniformly for all $x \in\{\pm n\} \times\T^{d-1}$ as $n \to\infty$.
\end{lemma}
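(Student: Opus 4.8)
The plan is to recognize the expression $\E_x \int_0^\tau \varphi_n^\pm(X_{\pm}(t))\,dt$ as an occupation-time functional for the periodic diffusion $X^\pm$, started far from the interface, and to evaluate its limit by homogenization. I will treat the $+$ case, the $-$ case being identical. First I would set $u_n(x) = \E_x \int_0^\tau \varphi_n^+(X_+(t))\,dt$, where $\tau$ is the hitting time of $\d\I_\eta = \{x_1 = \eta\}$ (only the hyperplane $x_1 = \eta$ is relevant here since we start at $x_1 = n > \eta$). Then $u_n$ solves the boundary value problem $\CL_+ u_n = -\varphi_n^+$ on $\{x_1 > \eta\} \times \T^{d-1}$ with $u_n = 0$ on $\{x_1 = \eta\}$, where $\CL_+ = \frac12\Delta + b_+\cdot\nabla$ is the generator of $X^+$. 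The strategy is to rescale: put $v_n(y) = u_n(ny)$, so that $v_n$ solves a problem on $\{y_1 > \eta/n\}$ with right-hand side $-n^{-2}\varphi_n^+(ny) = -\varphi(y_1)$ (using the scaling built into $\varphi_n^+$) and with highly oscillatory coefficients $b_+(ny/\eps')$-type terms — more precisely $\CL_+$ acting in the $y$ variable becomes $\frac{1}{2n^2}\Delta_y + \frac1n b_+(ny)\cdot\nabla_y$, which after multiplying by $n^2$ is the standard periodic-homogenization operator at scale $1/n$. By the classical periodic homogenization result quoted in the introduction, $v_n$ converges (locally uniformly, say) to the solution $\bar v$ of the homogenized problem $\frac{D_{11}^+}{2}\,\bar v'' = -\varphi(y_1)$ on the half-line $y_1 > 0$ (the transverse directions drop out because $\varphi$ depends only on $y_1$ and the homogenized operator has constant coefficients), with $\bar v(0) = 0$ and $\bar v$ bounded.

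Next I would solve this one-dimensional homogenized problem explicitly. Integrating $\frac{D_{11}^+}{2}\bar v'' = -\varphi$ twice with $\bar v(0) = 0$ and the boundedness/matching condition at infinity (the solution must be constant for $y_1 > 2$ since $\varphi$ is supported in $[1,2]$, and bounded forces the linear part to vanish there), one finds $\bar v'(y_1) = \frac{2}{D_{11}^+}\int_{y_1}^2 \varphi(s)\,ds$ for $y_1 \in [0,2]$ and $\bar v'(y_1) = 0$ for $y_1 > 2$, hence $\bar v(y_1) \to \frac{2}{D_{11}^+}\int_0^2\Bigl(\int_{y_1}^2\varphi(s)\,ds\Bigr)dy_1 = \frac{2}{D_{11}^+}\int_1^2 s\,\varphi(s)\,\mathellipsis$ — wait, more carefully, the constant value for large $y_1$ is $\frac{2}{D_{11}^+}\int_0^\infty \int_{y_1}^\infty \varphi(s)\,ds\,dy_1 = \frac{2}{D_{11}^+}\int_0^\infty s\varphi(s)\,ds$. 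Hmm — but the claimed limit is $2\bar\varphi/D_{11}^+$ with $\bar\varphi = \int_1^2\varphi$, not $\int s\varphi(s)\,ds$. The resolution is that we evaluate $u_n$ at $x$ with $x_1 = n$, i.e. $v_n$ at $y_1 = 1$, not at $y_1 \to \infty$: since $\varphi$ is supported in $[1,2]$, for the limiting profile we have $\bar v(1) = \frac{2}{D_{11}^+}\int_0^1\int_{y_1}^2\varphi(s)\,ds\,dy_1 = \frac{2}{D_{11}^+}\int_0^1\int_1^2\varphi(s)\,ds\,dy_1 = \frac{2\bar\varphi}{D_{11}^+}$, using that on $[0,1]$ the inner integral $\int_{y_1}^2\varphi = \int_1^2\varphi = \bar\varphi$ is constant. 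That matches exactly. So the key point is to read off the value of the homogenized profile at the rescaled starting level $y_1 = 1$, together with the fact that the support of $\varphi$ being above this level makes the answer insensitive to the precise shape of $\varphi$.

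For rigor I would need three ingredients, and I expect the second to be the main obstacle. (1) The reduction to the BVP and the rescaling are routine. (2) Justifying convergence $v_n \to \bar v$ uniformly on the slab $\{y_1 \in [0,3]\} \times \T^{d-1}$ including \emph{uniformity over the starting point $x$ in $\{n\}\times\T^{d-1}$} — this is really a statement that the oscillatory corrector and the boundary layer near $\{y_1 = \eta/n\}$ decay, and one must ensure the domain being a half-slab (unbounded in $y_1$) doesn't cause trouble; here I would use the exponential confinement coming from the fact that once the rescaled process reaches $y_1 \approx 2$ it never picks up more mass from $\varphi$, plus a barrier/comparison argument (as in the proof of Lemma~\ref{lem:boundexit}) to get uniform-in-$n$ a priori bounds on $u_n$ and its derivatives so that one may pass to the limit. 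The transverse periodicity means the strong maximum principle applies just as it did there. (3) Checking the transverse directions genuinely decouple: since $\varphi_n^\pm$ depends only on $x_1$, and the homogenized operator $\bar\CL_+$ has constant coefficients with no cross terms affecting a function of $x_1$ alone, $\bar v$ is a function of $y_1$ only, so the torus directions integrate out trivially. Assembling these, the uniform convergence $u_n(x) \to \bar v(1) = 2\bar\varphi/D_{11}^+$ for $x_1 = n$ follows, which is the claim; the $-$ side is verbatim the same with $D_{11}^-$ and $x_1 = -n$.
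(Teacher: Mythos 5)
Your route is genuinely different from the paper's, and it is essentially viable. The paper argues probabilistically: by standard periodic homogenization the rescaled process $n^{-1}X_+(n^2t)$ converges weakly to Brownian motion with diffusion coefficient $D_{11}^+$; a continuity-of-the-stopped-process result is invoked so that the process stopped on hitting the interface converges to Brownian motion stopped at the hyperplane $\{x_1=0\}$; the limit value $2\bar\varphi/D_{11}^+$ is then an explicit computation for that stopped Brownian motion (your Green's function identity $G(1,y)=2(1\wedge y)/D_{11}^+$), after truncating the occupation functional at a finite time $T$ to make it a continuous functional on path space and letting $T\to\infty$. You instead homogenize the elliptic boundary-value problem $\CL_+u_n=-\varphi_n^+$, $u_n=0$ on $\{x_1=\eta\}$, rescale, and evaluate the limiting one-dimensional profile at $y_1=1$; your computation $\bar v(1)=2\bar\varphi/D_{11}^+$ is correct, and the observation that the starting level lies below the support of $\varphi$ (so the answer sees only $\bar\varphi$, not $\int s\varphi(s)\,ds$) is exactly the right point. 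What your formulation buys is that uniformity over the starting point in $\{n\}\times\T^{d-1}$ comes for free once one has locally uniform convergence of $v_n$; what the paper's formulation buys is that it never needs a quantitative or up-to-the-boundary homogenization statement on an unbounded domain, only weak convergence plus a soft truncation argument.

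The one concrete flaw in your sketch is the assertion that ``once the rescaled process reaches $y_1\approx2$ it never picks up more mass from $\varphi$'': this is false, since any path above level $2$ must cross the slab $\{1\le y_1\le 2\}$ again on its way to the absorbing boundary, and indeed the expected occupation started from a high level $R$ is of order one, not small. So exponential confinement is not what controls the unboundedness of the half-slab. The correct repair is available, though: starting from level $y_1=1$, the probability of reaching level $R$ before the boundary is $O(1/R)$ uniformly in $n$ (a barrier function as in Lemma~\ref{lemgam} or Lemma~\ref{lem:boundexit} gives this for the periodic diffusion), while the occupation functional started from level $R$ is bounded uniformly in $R$ and $n$; hence stopping the problem on exit from $\{y_1<R\}$ costs only $O(1/R)$, after which the homogenization convergence is applied on a fixed bounded slab. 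Note also that the well-posedness of your limit problem (discarding the linear solution $c\,y_1$, i.e.\ forcing $\bar v'(\infty)=0$) is not automatic: it requires a uniform-in-$n$ bound on $u_n$, which is the same Green-function/barrier estimate. With these repairs your argument goes through; as written, these steps are only gestured at, and the specific justification you offer for them would fail.
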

\begin{pf}
Again, we only consider the expression for $X_+$, the one for $X_-$
follows in the same way.
It follows from standard homogenization results \cite{lions,MR2382139}
that the law of $n^{-1} X_+(n^2 t)$
converges weakly as $n \to\infty$ to the law of Brownian motion with
diffusion coefficient $D_{11}^+$. It thus follows from
\cite{MR2267655}, Corollary 8.4.2,
that the law of $n^{-1} X_+(n^2 t)$, where $X_+$ is stopped at the
first hitting time of $\I_\eta$ converges weakly as $n \to\infty$
to the law of Brownian motion
stopped when it reaches the hyperplane $\I_0$.

Denoting this limiting process by $X_+^\infty$, an explicit
calculation allows to check that
$\E_x \int_0^\tau\varphi(X_{+}^\infty(t)) \,dt = {2\bar\varphi
\over D_{11}^\pm}$ when $x_1 = 1$.
Now, for any fixed $T>0$, the map $\Phi_T\dvtx X \mapsto\int
_0^{\tau\wedge T} \varphi_n^+(X(t)) \,dt$ is continuous,
so that $\E_x \int_0^{\tau\wedge T} \varphi_n^+(X_{+}(t)) \,dt$
converges as $n \to\infty$ to
$\E_x \int_0^{\tau\wedge T} \varphi(X_{+}^\infty(t)) \,dt$. Letting
$T \to\infty$ concludes the proof.
\end{pf}

We now have all the tools that we need to show that the exit
probabilities from the interface
converge to the desired limiting values.
\begin{pf*}{Proof of Proposition \ref{probabilityconverge}}
Similarly to the proof of Proposition \ref{invariantconv} we use a
representation of the invariant measure $\mu$ in
terms of an embedded Markov chain. This time, we consider the stopping times
$\tilde{\phi}^{(k)}_0 = \inf\{t \ge0 \dvtx |X_1(t)| = \eta\}$ and then
$\tilde{\sigma}^{(k)}_n = \inf\{t \ge\tilde{\phi}^{(k)}_n \dvtx
|X_1(t)| = k\}$, $\tilde{\phi}^{(k)}_{n+1} = \inf\{t \ge\tilde
{\sigma}^{(k)}_n \dvtx |X_1(t)| = \eta\}$.
Denoting as similar to before by $\tilde{\pi}^{(k)}$ the invariant
measure of the embedded Markov chain
$\tilde{Z}^{(k)}_n = X(\tilde{\phi}^{(k)}_n)$ (which is now a Markov
chain on $\partial\I_\eta$), we set
%
%
\begin{equation} \label{e:muAnew}
\tilde{\mu}^{(k)}(B) = \int_{\partial\I_\eta} \E_{x} \int
_0^{\tilde{\phi}_1^{(k)}} \one_{B}(X(s)) \,ds\, \tilde{\pi
}^{(k)}(dx) .
\end{equation}
Again, the measures $\tilde{\mu}^{(k)}$ differ from $\mu$ purely
through a scaling factor, so that there are constants
$C_k$ such that $\tilde{\mu}^{(k)}(B) = \tilde{C}_k \mu(B)$ for
every measurable set $B$.

The idea now is to evaluate $\tilde{\mu}^{(k)} (\varphi_k^\pm)$ in
two different ways and to compare the resulting answers.
First, we note from Proposition \ref{invariantconv} that
\[
\mu^{(k)} (\varphi_k^\pm) = {C_k \over k} \bigl(q_\pm\bar\varphi
+ \CO(k^{-1}) \bigr) .
\]
On the other hand, combining Proposition \ref{converges} and
Lemma \ref{lem:convper} with
the definition (\ref{e:muAnew}), we see that
%
%
\begin{equation}\label{e:normmuk}
\mu^{(k)} (\varphi_k^\pm) = {2 p_\pm^{(k)} \bar\varphi\over
D_{11}^\pm} + o(1)
\end{equation}
as $k \to\infty$. Combining these two identities, we see that
\[
{p_+^{(k)} \over p_-^{(k)}} = {D_{11}^+ q_+ \over D_{11}^- q_-} + o(1) ,
\]
thus concluding the proof.
\end{pf*}

\section{Computation of the drift along the interface}
\label{sec:drift}

This section is devoted to the computation of the drift coefficients
$\alpha_j$ along the interface.
Denote by $\tau^n$ the first hitting time of $\partial\I_n$ by the
process $X$.
With this notation, recall that,
by (\ref{othercomponents}), we have the identity
%
%
\begin{equation}\label{e:defalpha}
\alpha_j = \lim_{n \to\infty} {1\over n} \E_x \int_0^{\tau^n}
b_{j}(X_s) \,ds ,
\end{equation}
provided that this limit exists and is independent (and uniform) over
starting points $x \in\I_\eta$.
\begin{proposition}\label{prop:convalpha}
The expression on the right-hand side in (\ref{e:defalpha}) converges
to the expression given by
(\ref{e:valuealpha}), uniformly in $x \in\I_\eta$.
\end{proposition}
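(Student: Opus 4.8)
The plan is to proceed exactly as in the proof of Proposition~\ref{probabilityconverge}, representing the $\sigma$-finite invariant measure $\mu$ of $X$ in terms of the embedded Markov chain on $\d\I_\eta$ associated with the stopping times $\tilde\phi_n^{(k)}, \tilde\sigma_n^{(k)}$, and to evaluate a suitable quantity in two different ways. Here the natural quantity to consider is the occupation-type integral $\E_x \int_0^{\tau^n} \tilde b_j(X_s)\,ds$ where $\tilde b_j = b_j + \CL g_j$ is the \emph{corrected} drift. The point of introducing the corrector is the one emphasised in Section~\ref{sec:tight}: $\tilde b$ vanishes outside $\I_\eta$, so $\int_0^{\tau^n}\tilde b_j(X_s)\,ds = \int_0^{\tau^n} \tilde b_j(X_s)\one_{\I_\eta}(X_s)\,ds$ is controlled by the occupation time of the \emph{compact} interface, whereas the uncorrected $\int_0^{\tau^n} b_j(X_s)\,ds$ is not obviously finite in expectation. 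First I would check that the right-hand side of \eref{e:defalpha} is unchanged if $b_j$ is replaced by $\tilde b_j$: the difference is $\frac1n\E_x\int_0^{\tau^n}\CL g_j(X_s)\,ds = \frac1n\E_x\bigl(g_j(X_{\tau^n}) - g_j(x)\bigr)$ by Dynkin's formula, and since $g_j$ is bounded this is $\CO(1/n) \to 0$, uniformly in $x\in\I_\eta$.

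Next I would run the two-fold evaluation of $\tilde\mu^{(k)}(\tilde b_j)$, or more precisely of a quantity of the form $\tilde\mu^{(k)}(\tilde b_j \cdot \text{test})$, against the identity $\tilde\mu^{(k)} = \tilde C_k\mu$. On one side, using the definition \eref{e:muAnew} and the fact that $\tilde b_j$ is supported in $\I_\eta$, one gets $\tilde\mu^{(k)}(\tilde b_j) = \int_{\d\I_\eta} \E_x\bigl(\int_0^{\tilde\phi_1^{(k)}} \tilde b_j(X_s)\,ds\bigr)\tilde\pi^{(k)}(dx)$, and since excursions to $\d\I_k$ make no contribution to the integrand, this equals $\int_{\d\I_\eta}\E_x\bigl(\int_0^{\tilde\sigma_0^{(k)}}\tilde b_j(X_s)\,ds\bigr)\tilde\pi^{(k)}(dx)$ — essentially one excursion's worth of the integral defining $\alpha_j$. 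On the other side, $\tilde\mu^{(k)}(\tilde b_j) = \tilde C_k\,\mu(\tilde b_j) = \tilde C_k\int_{\R\times\T^{d-1}}(b_j + \CL g_j)\,d\mu$ with $\mu$ normalised so that $q_+ + q_- = 1$; this is exactly the integral appearing in \eref{e:valuealpha} up to the overall constant $\tilde C_k$. The remaining work is to pin down the normalisation constant $\tilde C_k$ and the relation between $\frac1k\E_x\int_0^{\tau^k}(\cdots)$ and one excursion of the embedded chain. For the former I would reuse the strategy of the proof of Proposition~\ref{probabilityconverge}: evaluate $\tilde\mu^{(k)}$ against the functions $\varphi_k^\pm$ from Lemma~\ref{lem:convper}, which by Proposition~\ref{invariantconv} gives $\mu^{(k)}(\varphi_k^\pm) = \frac{\tilde C_k}{k}(q_\pm\bar\varphi + \CO(1/k))$, while by Proposition~\ref{converges} together with Lemma~\ref{lem:convper} it also equals $\frac{2 p_\pm^{(k)}\bar\varphi}{D_{11}^\pm} + o(1)$. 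Hence $\tilde C_k / k \to \bigl(2 p_\pm / (q_\pm D_{11}^\pm)\bigr)\bar\varphi/\bar\varphi$, i.e. $\tilde C_k \sim k\cdot 2 p_\pm/(q_\pm D_{11}^\pm)$; consistency of the $+$ and $-$ computations (which holds because $p_\pm/(q_\pm D_{11}^\pm)$ has the same value for both signs, by the definition of $p_\pm$) gives $\tilde C_k / k \to 2\bigl(p_+/D_{11}^+ + p_-/D_{11}^-\bigr)$ after writing $1 = q_+ + q_-$.

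Assembling the pieces: from the embedded-chain representation, $\frac1k\E_x\int_0^{\tau^k}\tilde b_j(X_s)\,ds$ converges as $k\to\infty$ to $\lim_k \frac1{\tilde C_k}\cdot\frac{\tilde C_k}{k}\cdot\tilde\mu^{(k)}(\tilde b_j) = \bigl(\lim_k \tfrac{\tilde C_k}{k}\bigr)\cdot\mu(\tilde b_j) = 2\bigl(\tfrac{p_+}{D_{11}^+} + \tfrac{p_-}{D_{11}^-}\bigr)\int_{\R\times\T^{d-1}}(b_j + \CL g_j)\,d\mu$, which is precisely \eref{e:valuealpha}. Uniformity in $x\in\I_\eta$ comes, as in Proposition~\ref{probabilityconverge}, from the mixing of the embedded chain — by the time the process has completed even one excursion to $\d\I_k$ its law on $\d\I_\eta$ is close to $\tilde\pi^{(k)}$ independently of where it started, using the Doeblin-type bounds already established. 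I expect the main obstacle to be the bookkeeping around the normalisation constant $\tilde C_k$: one must be careful that the occupation-time integral $\E_x\int_0^{\tilde\sigma_0^{(k)}}\tilde b_j(X_s)\,ds$ is uniformly bounded in $x$ and in $k$ (which follows from a trial/error argument of the kind used repeatedly above, together with the fact that $\tilde b_j$ is bounded and supported in the compact set $\I_\eta$, so only the expected occupation time of $\I_\eta$ before leaving $\I_k$ matters), and that the error terms $\CO(1/k)$ coming from Proposition~\ref{invariantconv} and the $o(1)$ from Lemma~\ref{lem:convper} are genuinely uniform in the starting point. Everything else is a direct transcription of the argument already carried out for the transmissivity coefficient.
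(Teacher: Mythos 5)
Your first half is exactly the paper's route and is correct in outline: replacing $b_j$ by the corrected drift $\tilde b_j=b_j+\CL g_j$ (the boundary term $g_j(X(\tau^n))/n$ vanishes), representing $\tilde\mu^{(k)}$ through the embedded chain on $\d\I_\eta$, and identifying the normalisation $\tilde C_k/k\to 2\bigl(p_+/D_{11}^++p_-/D_{11}^-\bigr)$ by testing against $\varphi_k^\pm$ is precisely the content of Lemma~\ref{lem:normmuk} and Proposition~\ref{prop:convalphap}.

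The gap is in your last step, the uniformity in $x\in\I_\eta$, which is in fact the bulk of the paper's proof of Proposition~\ref{prop:convalpha}. The identity $\frac1k\tilde\mu^{(k)}(\tilde b_j)=\frac1k\E_{\tilde\pi^{(k)}}\int_0^{\tilde\phi_1^{(k)}}\tilde b_j(X_s)\,ds$ gives the limit only when the process is started from the invariant measure of the embedded chain (more precisely from $\tilde\pi^{(k)}_n$, the invariant measure of the conditioned kernel \eref{e:defP}), and passing from there to a fixed $x\in\I_\eta$ cannot be dispatched by saying that after one excursion to $\d\I_k$ the law on $\d\I_\eta$ is close to $\tilde\pi^{(k)}$. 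First, if you take the excursion level equal to the exit level ($k=n$, as in your final display with $\tau^k$), there is exactly one excursion and the embedded chain has no opportunity to mix at all. Second, with $k$ fixed and $n\to\infty$, the quantity is an additive functional of size $\CO(n)$ accumulated over a random, initial-condition-dependent number (geometric, of mean $\approx n/k$) of excursions: Doeblin contraction controls the difference of the per-excursion integrands after $m$ steps by $\CO((1-\eta)^m)$, but one must also control the differences of the survival probabilities $\P_x(\tilde\phi_m^{(k)}<\tau^n)-\P_y(\tilde\phi_m^{(k)}<\tau^n)$ that multiply each term of the series \eref{e:exprx}; each such difference is only $\CO(m/n)$ for moderate $m$, and there are of order $n$ relevant values of $m$, so the naive bound is $\CO(n)$, not $o(n)$. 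The paper resolves this with the interpolated estimate $\bigl|\P_x(\tilde\phi_m^{(k)}<\tau^n)-\P_y(\tilde\phi_m^{(k)}<\tau^n)\bigr|\le \frac{K}{\sqrt n}\wedge\frac{Km}{n}\wedge\bigl(1-\frac cn\bigr)^m$, summing to $\CO(\sqrt n)=o(n)$, together with the Doeblin property of the conditioned kernel \eref{e:defP}, the uniform per-excursion bound $\bigl|\E_x\int_0^{\tilde\phi_1^{(k)}\wedge\tau^n}b(X_s)\,ds\bigr|\le C$, and the two-sided estimate $1-\frac Cn\le\P_x(\tilde\phi_1^{(k)}<\tau^n)\le 1-\frac cn$. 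None of this appears in your sketch, and the one-step-mixing claim you substitute for it is neither correct as stated nor sufficient for an additive functional of this size.
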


In order to show this, we will use the same construction as in the
proof of Proposition \ref{probabilityconverge}.
In particular, recall the definition (\ref{e:muAnew}) of the measures
$\tilde{\mu}^{(k)}$, which are
nothing but multiples of the invariant measure $\mu$, as well as the
sequence of stopping times
$\tilde{\phi}_n^{(k)}$ and $\tilde{\sigma}_n^{(k)}$. Denote
furthermore by $\tilde{\pi}_n^{(k)}$ the invariant measure
for the process on $\partial\I_\eta$ with transition probabilities
$P(x,A)$ given by
%
%
\begin{equation}\label{e:defP}
P(x,A) \stackrel{\mathrm{def}}{=}\PP_x\bigl(X\bigl(\tilde{\phi
}_1^{(k)}\bigr) \in A | \tau^n
> \tilde{\phi}_1^{(k)}\bigr) .
\end{equation}

Our proof will proceed in two steps. First, we show that the limit
(\ref{e:defalpha}) exists and is equal to the
value (\ref{e:valuealpha}) given in the interface, provided that we
start the process $X$ in the stationary
measure $\tilde{\pi}^{(k)}_n$ and let $k \to\infty$. In the second
step, we then show by a coupling argument similar to
the proof of Proposition \ref{converges} that the expression in (\ref
{e:defalpha}) depends only weakly
on the initial condition as $n$ gets large, thus concluding the proof.

Before we proceed with this program, we perform the following
preliminary calculation.
\begin{lemma}\label{lem:normmuk}
One has the normalization
\[
\lim_{k \to\infty} k^{-2} \tilde{\mu}^{(k)}([-k,k] \times\T
^{d-1}) = 2 \biggl({p_+ \over D_{11}^+} + {p_- \over
D_{11}^-} \biggr) \stackrel{\mathit{def}}{=}\beta,
\]
where the coefficients $p_\pm$ are as in (\ref{whichsideitescapes}).
In particular, if $\mu$ is normalized as in
the \hyperref[sec1]{Introduction}, then one has $k^{-1} \tilde{\mu
}^{(k)} \approx
\beta\mu$ for large values of $k$.
\end{lemma}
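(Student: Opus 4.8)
The plan is to decompose the measure $\tilde{\mu}^{(k)}\bigl([-k,k]\times\T^{d-1}\bigr)$ according to the excursions between successive visits to the interface $\d\I_\eta$ and to track separately the contribution of the time spent on the positive side and on the negative side of the interface. Recall from \eref{e:muAnew} that $\tilde{\mu}^{(k)}(B) = \int_{\d\I_\eta}\E_x\int_0^{\tilde{\phi}_1^{(k)}}\one_B(X(s))\,ds\,\tilde{\pi}^{(k)}(dx)$, so that with $B = [-k,k]\times\T^{d-1}$ this is exactly the expected amount of time a single excursion (from $\d\I_\eta$ back to $\d\I_\eta$, possibly after reaching $\d\I_k$) spends in the slab $\I_k$, averaged over the stationary law $\tilde{\pi}^{(k)}$ of the entry point. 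Since the process starts on $\d\I_\eta$ with $\eta$ fixed, the time spent inside the \emph{bounded} region $\I_\eta$ during one excursion is $\CO(1)$ uniformly in $k$ (by a trial/error argument exactly as the one used in the proof of Proposition~\ref{invariantconv} to bound $\E_x\int_0^{\phi_1^{(0)}}\one_A(X^+(s))\,ds$), hence contributes $o(k^2)$ after dividing by $k^2$. So the whole contribution comes from the time spent in $\I_k\setminus\I_\eta$.

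Next I would split an excursion at the first time it leaves $\I_\eta$ and note that, conditionally on leaving through the right (resp.\ left) face, the process behaves until its return to $\d\I_\eta$ like the periodic process $X_+$ (resp.\ $X_-$) — this is the same observation already exploited in the proof of Proposition~\ref{probabilityconverge}. By Proposition~\ref{converges} the probability of leaving to the right converges to $p_+$ uniformly over the starting point on $\d\I_\eta$, and similarly for $p_-$, so after averaging against $\tilde{\pi}^{(k)}$ the slab-occupation time factorises, up to $o(k^2)$, as
\begin{equ}
\tilde{\mu}^{(k)}\bigl([-k,k]\times\T^{d-1}\bigr) = p_+ \,\E\bigl[T^+_k\bigr] + p_-\,\E\bigl[T^-_k\bigr] + o(k^2)\;,
\end{equ}
where $T^\pm_k$ is the time spent in $[0,k]\times\T^{d-1}$ (resp.\ $[-k,0]\times\T^{d-1}$) by the periodic process $X_\pm$ started just inside the interface and run until it first returns to $\d\I_\eta$ or, equivalently up to $\CO(1)$ error, until it first hits $\d\I_k$. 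It then remains to show $k^{-2}\E[T^\pm_k] \to 2/D_{11}^\pm$.

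For this last step the natural tool is the same homogenization-plus-stopping argument used in Lemma~\ref{lem:convper}: the law of $k^{-1}X_+(k^2 t)$ stopped on hitting $\I_\eta$ converges weakly to Brownian motion with diffusion coefficient $D_{11}^+$ in the first component, stopped on hitting $\{x_1 = 0\}$; by the same token the stopped-at-$\d\I_k$ version converges to that Brownian motion run until it first exits $(0,1)$ in its rescaled first coordinate. A direct computation for one-dimensional Brownian motion with variance parameter $D_{11}^+$ on $[0,1]$ gives expected exit time $x_1(1-x_1)/D_{11}^+$, and integrating the rescaled occupation time of $[0,1]$ against this (or, more simply, computing $\E\int_0^{\tau_{(0,1)}}\!dt$ directly and using that the rescaled occupation density of the whole slab is just the time) yields the constant $2/D_{11}^\pm$ after accounting for the factor-of-two from the symmetric two-sided construction; I would import this exactly as Lemma~\ref{lem:convper} does, handling the passage to the unbounded time horizon by a truncation at time $T$ followed by $T\to\infty$, using the uniform-in-$k$ bound $\E[T^\pm_k] = \CO(k^2)$ (which itself follows from Lemma~\ref{lem:boundexit} after rescaling) to control the tail. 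The main obstacle is precisely this interchange of the $k\to\infty$ weak-convergence limit with the unbounded occupation-time functional: the functional $X\mapsto\int_0^{\tau_{\d\I_k}}\one_{\I_k}(X(s))\,ds$ is not bounded and not continuous on path space without the stopping time being controlled, so one must combine the a priori moment bound from Lemma~\ref{lem:boundexit} with the weak convergence of the stopped processes, exactly in the spirit of \cite[Cor~8.4.2]{MR2267655} as invoked in Lemma~\ref{lem:convper}. Once $k^{-2}\E[T^\pm_k]\to 2/D_{11}^\pm$ is in hand, the claimed limit $\beta = 2\bigl(p_+/D_{11}^+ + p_-/D_{11}^-\bigr)$ follows immediately, and the final sentence about $k^{-1}\tilde{\mu}^{(k)}\approx\beta\mu$ is just the translation of this normalisation into the convention $q_+ + q_- = 1$ fixed in the introduction, using that $\tilde{\mu}^{(k)} = \tilde{C}_k\mu$ and $\mu\bigl([-k,k]\times\T^{d-1}\bigr)\sim (q_++q_-)k = k$.
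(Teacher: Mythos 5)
Your reduction of the slab occupation to one-sided quantities of the periodic processes is where the argument breaks down. The factorisation $\tilde{\mu}^{(k)}\bigl([-k,k]\times\T^{d-1}\bigr) = p_+\E[T_k^+]+p_-\E[T_k^-]+o(k^2)$ with $T_k^\pm$ defined through $X_\pm$ alone is not justified, for two separate reasons. First, the $p_\pm$ of Proposition~\ref{converges} are the probabilities that the \emph{whole cycle} exits $\I_k$ on the $\pm$ side, not the probability that an individual sub-excursion from $\d\I_\eta$ goes to the right: before the process first reaches $\d\I_k$ it typically crosses the interface of order $k$ times and accumulates occupation of order $k^2$ in \emph{both} half-slabs, whichever side it eventually exits on, so ``conditionally on leaving through the right face the process behaves like $X_+$ until its return to $\d\I_\eta$'' describes only the final leg after $\tilde{\sigma}_0^{(k)}$, not the part of the cycle that produces most of the slab occupation. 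Second, your two stopping rules are not equivalent up to $\CO(1)$: the return leg from $\d\I_k$ back to $\d\I_\eta$ by itself contributes order $k^2$ to the slab occupation (it is exactly the quantity computed in Lemma~\ref{lem:convper}), whereas an excursion of $X_\pm$ started just outside $\I_\eta$ and stopped on returning to $\d\I_\eta$ contributes only $\CO(k)$. Accordingly the asserted limit $k^{-2}\E[T_k^\pm]\to 2/D_{11}^\pm$ fails under either reading of $T_k^\pm$: for Brownian motion with diffusion coefficient $D$ started at the interface, the expected occupation of $(0,k)$ before leaving $(-k,k)$ is $k^2/(2D)$ and before the one-sided hitting of level $k$ it is $k^2/D$; neither equals $2k^2/D$, and the ``factor of two from the symmetric two-sided construction'' is not an argument. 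The correct bookkeeping for the limiting glued process (scale function proportional to $x_1/p_\pm$ on either side) gives occupation $p_\pm k^2/D_{11}^\pm$ of each half-slab on the outbound phase and the same again on the return leg, totalling $\beta k^2$ — but that computation uses the two-sided gluing through $p_\pm$ inside the Green function and cannot be recovered by weighting one-sided quantities of $X_\pm$ with the exit probabilities.

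The paper sidesteps this difficulty entirely: since $\tilde{\mu}^{(k)}=\tilde C_k\mu$, one only needs to identify the constant $\tilde C_k$, and this is done by testing against the functions $\varphi_k^\pm$, which are supported in $\{k\le\pm x_1\le 2k\}$, i.e.\ \emph{beyond} $\d\I_k$, where only the purely periodic return leg of the cycle contributes. Lemma~\ref{lem:convper} together with Proposition~\ref{converges} then gives $\tilde{\mu}^{(k)}(\varphi_k^\pm)=2p_\pm\bar\varphi/D_{11}^\pm+o(1)$, which is \eref{e:normmuk}, while Proposition~\ref{invariantconv} gives $\mu(\varphi_k^\pm)=q_\pm\bar\varphi/k+o(1/k)$ and $\mu\bigl([-k,k]\times\T^{d-1}\bigr)=k+o(k)$ under the normalisation $q_++q_-=1$; combining these with \eref{whichsideitescapes} identifies $\tilde C_k\sim\beta k$ and the lemma follows. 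If you insist on computing the slab occupation directly, you must control the occupation of both half-slabs by the genuine interface process up to its exit from $\I_k$, which is essentially redoing this calibration the hard way. Your final sentence (deducing $k^{-1}\tilde{\mu}^{(k)}\approx\beta\mu$ from $\tilde{\mu}^{(k)}=\tilde C_k\mu$ and $\mu([-k,k]\times\T^{d-1})\sim k$) is fine once the limit itself is established.
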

\begin{pf}
We know from Proposition \ref{invariantconv} that $\mu(dx) \to\mu
_\pm(dx)$ at exponential
rate as $x_1 \to\pm\infty$, so that on large scales $\mu$ behaves
like a multiple of Lebesgue measure
on either side of the interface. Furthermore, we know from
Proposition~\ref{probabilityconverge}
that the corresponding normalization constants satisfy the relation
(\ref{whichsideitescapes}).
Combining this with the fact that $\tilde{\mu}^{(k)}$ is just a
multiple of $\mu$, the result then follows
from (\ref{e:normmuk}).
\end{pf}

Using this result, we obtain the following proposition.
\begin{proposition}\label{prop:convalphap}
The limit
\[
\alpha_j = \lim_{k \to\infty} \lim_{n \to\infty} {1\over n} \E
_{\tilde{\pi}_n^{(k)}} \int_0^{\tau^n} b_{j}(X_s) \,ds ,
\]
exists and is equal to
%
%
\begin{equation}\label{e:valuecorr}
\beta\int_{\R\times\T^{d-1}} \bigl(b_j(x) + \CL g_j(x)\bigr)
\mu(dx) ,
\end{equation}
where $g$ is the function fixed in Section \ref{sec:tight} and the
constant $\beta$ is as in~Lemma~\ref{lem:normmuk}.
\end{proposition}
\begin{remark}
Note that if $\phi$ is any smooth compactly supported function, then
the identity $\int\CL\phi(x) \mu(dx) = 0$ holds. As a consequence, the
expression (\ref{e:valuecorr}) is independent of the choice of the
compensator $g$.
\end{remark}
\begin{pf*}{Proof of Proposition \ref{prop:convalphap}}
It follows from the definition of $\tilde{\pi}_n^{(k)}$ and the
strong Markov property of $X$ that one has the
identity
%
%
\begin{eqnarray}\label{e:intbt}
&&\E_{\tilde{\pi}_n^{(k)}} \int_0^{\tau^n} \tilde b_{j}(X_s) \,ds\nonumber\\
&&\qquad=
\sum_{m \ge0} \bigl(\PP_{\tilde{\pi}_n^{(k)}}\bigl(\tilde{\phi}_1^{(k)}
< \tau^n\bigr)\bigr)^m \E_{\tilde{\pi}_n^{(k)}} \int_0^{\tilde{\phi
}_1^{(k)}\wedge\tau^n} \tilde b_{j}(X_s) \,ds \\
&&\qquad= {\E_{\tilde{\pi}_n^{(k)}} \int_0^{\tilde{\phi}_1^{(k)}\wedge
\tau^n}
\tilde b_{j}(X_s) \,ds \over\PP(\tilde{\phi}_1^{(k)} > \tau^n)
} .\nonumber
\end{eqnarray}
Note now that it follows from Lemma \ref{lemgam} that
%
%
\begin{equation}\label{e:probphi}
\PP\bigl(\tilde{\phi}_1^{(k)} > \tau^n\bigr) = {k/n + \CO(1/n)} .
\end{equation}
Since $\lim_{n\rightarrow\infty} g_{j}(X(\tau^{n}))/n=0$ and
furthermore, using the same argument as in (\ref{e:boundTVIM}), we have
$\lim_{n \to\infty} \|\tilde{\pi}_n^{(k)} - \tilde{\pi}^{(k)}\|
_\TV= 0$ for every $k > 0$, so that
%
%
\begin{eqnarray}\label{alphajinv}\qquad
\lim_{n \to\infty} {1\over n} \E_{\tilde{\pi}_n^{(k)}} \int
_0^{\tau^n} b_{j}(X_s) \,ds
& = &\lim_{n \to\infty} {1\over n} \E_{\tilde{\pi}_n^{(k)}}
\biggl[\int_0^{\tau^n} b_{j}(X_s) \,ds + g_{j}(X(\tau_n)) \biggr]
\nonumber\\
& = &\lim_{n \to\infty} {1\over n} \E_{\tilde{\pi}_n^{(k)}} \int
_0^{\tau^n} \tilde{b}_{j}(X_s) \,ds \nonumber\\
&=& \lim_{n \to\infty} {1\over k} \E_{\tilde{\pi}^{(k)}} \int
_0^{\tilde{\phi}_1^{(k)} \wedge\tau^n} \tilde{b}_{j}(X_s) \,ds
\\
&=& {1\over k} \E_{\tilde{\pi}^{(k)}} \int_0^{\tilde{\phi
}_1^{(k)}} \tilde{b}_{j}(X_s) \,ds\nonumber\\
& = &{1\over k} \int_{\R\times\T^{d-1}}
\tilde{b}_{j}(x) \tilde{\mu}^{(k)}(dx) .\nonumber
\end{eqnarray}
Here, we used (\ref{e:intbt}) and (\ref{e:probphi}) to go from the
second to the third line and we used the definition of the $\tilde{\mu}^{(k)}$
to obtain the last identity.
The claim now follows from Lemma \ref{lem:normmuk}.
\end{pf*}

We can now complete the proof.
\begin{pf*}{Proof of Proposition \ref{prop:convalpha}}
In view of Proposition \ref{prop:convalphap}, it remains to show that
\[
\lim_{n \to\infty} {1\over n} \biggl| \E_x \int_0^{\tau^n}
b(X_s) \,ds - \E_y \int_0^{\tau^n} b(X_s) \,ds \biggr| = 0 ,
\]
uniformly over $x,y \in\I_\eta$. Fix an arbitrary value of $k > \eta
$ and consider again the transition
probabilities $P$ given by (\ref{e:defP}). Since they arise as exit
probabilities for an elliptic diffusion,
we can show again by the same argument as in the proof of
Proposition \ref{invariantconv}
that $P$ satisfies the Doeblin
condition for some constant $\eta$, namely $\|P\nu_1 - P\nu_2\|_\TV
\le(1-\eta) \|\nu_1 - \nu_2\|_\TV$, uniformly
over probability measures $\nu_1$ and $\nu_2$ on $\partial\I_\eta
$. Note now that one has the identity
%
%
\begin{eqnarray}\label{e:exprx}\quad
\E_x \int_0^{\tau^n} b(X_s) \,ds &=& \sum_{m \ge0} \biggl(\prod_{0
\le\ell< m} \PP_\ell^x \bigl(\tilde{\phi}_1^{(k)} < \tau^n\bigr) \biggr)
\E_m^x \int_0^{\tilde{\phi}_1^{(k)} \wedge\tau^n} b(X_s)
\,ds\nonumber\\[-8pt]\\[-8pt]
&=& \sum_{m \ge0} \PP_x \bigl(\tilde{\phi}_m^{(k)} < \tau^n\bigr)
\E_m^x \int_0^{\tilde{\phi}_1^{(k)} \wedge\tau^n}
b(X_s) \,ds ,\nonumber
\end{eqnarray}
where we denote by $\PP_m$ (resp., $\E_m$) the probability (resp.,
expectation) for the process $X$ started
at $P^m(x,\cdot)$.

Note now that we have the identity
\[
\PP_x \bigl(\tilde{\phi}_m^{(k)} < \tau^n\bigr) = \PP_x \bigl(\tilde{\phi
}_\ell^{(k)} < \tau^n\bigr) + \PP_{P^\ell(x,\cdot)}
\bigl(\tilde{\phi}_{m-\ell}^{(k)} < \tau^n\bigr) .
\]
Also, by choosing $k$ sufficiently large (but independent of $n$), we
can ensure that there exist
constants $c,C>0$ such that
\[
1 - {C\over n} \le\PP_x \bigl(\tilde{\phi}_1^{(k)} < \tau^n\bigr) \le1 -
{c\over n} ,
\]
uniformly for $x \in\I_\eta$ and for $n$ sufficiently large.
It also follows from the contraction properties of $P$ that
\[
\bigl|\PP_m^x \bigl(\tilde{\phi}_1^{(k)} < \tau^n\bigr) - \PP_m^y \bigl(\tilde
{\phi}_1^{(k)} < \tau^n\bigr)\bigr| \le2(1-\eta)^m ,
\]
uniformly over $x,y \in\I_\eta$.

Combining these bounds, we obtain for every $\ell\le m \wedge n$ the estimate
\[
\bigl|\PP_x \bigl(\tilde{\phi}_m^{(k)} < \tau^n\bigr) - \PP_y \bigl(\tilde{\phi
}_m^{(k)} < \tau^n\bigr)\bigr|
\le{K \ell\over n} + 2(1-\eta)^\ell.
\]
In particular, there exists a constant $K$, such that we have the
uniform bound
\[
\bigl|\PP_x \bigl(\tilde{\phi}_m^{(k)} < \tau^n\bigr) - \PP_y \bigl(\tilde{\phi
}_m^{(k)} < \tau^n\bigr)\bigr|
\le{K \over\sqrt n}\wedge{Km\over n} \wedge\biggl(1-{c\over n}
\biggr)^m ,
\]
valid for every $m > 0$ and every $n$ sufficiently large. Summing over
$m$, it follows that
\[
\sum_{m \ge0}\bigl|\PP_x \bigl(\tilde{\phi}_m^{(k)} < \tau^n\bigr) - \PP_y
\bigl(\tilde{\phi}_m^{(k)} < \tau^n\bigr)\bigr|
\le K \sqrt n ,
\]
for a possibly different constant $K$.

On the other hand, it is possible to check that there exists a constant $C$
(depending on $k$) such that
\[
\biggl|\E_x \int_0^{\tilde{\phi}_1^{(k)} \wedge\tau^n} b(X_s)
\,ds \biggr| \le C ,
\]
uniformly over $x \in\I_\eta$, so that
\[
\biggl|\E_m^x \int_0^{\tilde{\phi}_1^{(k)} \wedge\tau^n} b(X_s)
\,ds - \E_m^y \int_0^{\tilde{\phi}_1^{(k)} \wedge\tau^n} b(X_s)
\,ds \biggr| \le2C (1-\eta)^m .
\]
Inserting these bounds into (\ref{e:exprx}), we obtain
\[
\biggl| \E_x \int_0^{\tau^n} b(X_s) \,ds - \E_y \int_0^{\tau^n}
b(X_s) \,ds \biggr|
\le2C\sum_{m \ge0} (1-\eta)^m + C\sqrt n ,
\]
so that the requested bound follows at once.
\end{pf*}

\subsection{Bound on the second moment}

In order to conclude the verification of the assumptions of
Theorem \ref{maintheorem},
it remains to show that the second bound holds in~(\ref
{othercomponents}). For the nonrescaled
process, we can reformulate this as
a proposition.
\begin{proposition}
For every $\bar\eta> 0$, there exists a constant $C>0$ such that the bound
\[
\E_y \|Y(\tau^n) - y\|^2 \le C n^2 ,
\]
holds for every $n \ge1$ and every initial condition $y \in\I_{\bar
\eta}$.
\end{proposition}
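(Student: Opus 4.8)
The plan is to compare the corrected process $Y$ with a suitable one-dimensional control and to use the fact that outside of the interface $Y$ has no drift, so its displacement is purely a martingale with bounded quadratic variation rate. Recall from \eref{e:defY} that
\begin{equ}
Y_i(\tau^n) - y_i = \int_0^{\tau^n} \tilde b_i(X(s))\,ds + \int_0^{\tau^n} \tilde\sigma_{ij}(X(s))\,dW_j(s)\;,
\end{equ}
with $\tilde b$ supported in $\I_\eta$ and $\tilde\sigma$ uniformly bounded. The martingale term is controlled by the Burkholder--Davis--Gundy inequality: its second moment is bounded by $C\,\E_y \tau^n$. For the bounded-variation term, first note that $X_1$ itself travels a distance $n$ (since $\tau^n$ is the first hitting time of $\d\I_n$), so we will get the bound on the first component almost for free, and then bootstrap to the full displacement using the gluing structure.

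First I would establish $\E_y \tau^n \le C n^2$. This is the natural diffusive scaling: the first component $X_1$ behaves like a Brownian motion (with diffusivity comparable to $D_{11}^\pm$ away from the interface, and spending only a negligible amount of time near it by Lemma~\ref{lem:timeinterface} in the $\eps=1$ regime), and the expected exit time of a diffusion from an interval of width $2n$ scales like $n^2$. Concretely, one constructs a Lyapunov-type function $V$ on $\I_n$ with $\CL V \le -1$ and $0 \le V \le Cn^2$, exactly as in the proof of Lemma~\ref{lem:boundexit}: take $V(x) = C_2 - C_1 x_1(2n - x_1)$ on the right half (and symmetrically on the left), add the first-order corrector $g_{\pm,1}$ to kill the $\CO(1)$ drift terms generated by $\CL$, and add a bounded second-order corrector to kill the residual $\CO(1)$ term; the same computation as around \eref{A13} shows the leading term of $\CL V$ is a negative constant times $n^{-2} \times n^2/n^2 \sim $ a constant, giving $\CL V \le -1$ after rescaling $C_1$. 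The maximum principle (applicable since everything is periodic in the directions $2,\dots,d$ in which $\I_n$ is unbounded) then yields $\E_y \tau^n \le V(y) \le Cn^2$.

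Given $\E_y \tau^n \le Cn^2$, the martingale term in each component satisfies $\E_y \big(\int_0^{\tau^n}\tilde\sigma_{ij}(X(s))\,dW_j(s)\big)^2 \le C\,\E_y\tau^n \le Cn^2$ by BDG and boundedness of $\tilde\sigma$. For the drift term, $\big|\int_0^{\tau^n}\tilde b_i(X(s))\,ds\big| \le \|\tilde b\|_\infty \int_0^{\tau^n}\one_{\I_\eta}(X(s))\,ds$, and the expected occupation time of the interface over $[0,\tau^n]$ is itself $\CO(n)$: decomposing into excursions as in the tightness proof (the construction with $\phi_n,\sigma_n$), each visit to $\I_\eta$ costs expected time $\CO(1)$, each excursion has probability $\CO(1/n)$ of reaching $\d\I_n$ before returning (Lemma~\ref{lemgam}), so the number of excursions before $\tau^n$ is $\CO(n)$ in expectation, giving total interface occupation time $\CO(n)$ and hence the drift contribution to $\E_y\|Y(\tau^n)-y\|^2$ is $\CO(n^2)$ as well — in fact $\CO(n)$, which is even better. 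Summing the two contributions over the finitely many components $i$ gives $\E_y\|Y(\tau^n)-y\|^2 \le Cn^2$, uniformly over $y \in \I_{\bar\eta}$ since all the Lyapunov and excursion bounds were uniform in the starting point.

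The main obstacle is the bound $\E_y\tau^n \le Cn^2$: one must make sure the Lyapunov function stays \emph{nonnegative} and \emph{bounded by $Cn^2$} simultaneously across the interface, where the two quadratic profiles $C_1 x_1(2n-x_1)$ (built for $I_+$) and its mirror image must be matched. Because the correctors $g_{\pm,1}$ differ on the two sides, one cannot in general glue a single smooth supersolution; instead one works on $\I_n \cap I_+$ and $\I_n \cap I_-$ separately, using the strong Markov property at the hitting time of $\I_\eta$ together with the already-established $\CO(1)$ bound on the expected sojourn in $\I_\eta$ and $\CO(1)$ bound on the expected time to exit a unit slab (Lemma~\ref{lem:boundexit}). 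This reduces the estimate to the one-sided supersolutions, which exist by the construction above. The remaining steps are routine applications of BDG and the excursion decomposition already used for tightness.
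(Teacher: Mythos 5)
Your overall plan matches the paper: split $Y(\tau^n)-y$ into the martingale and bounded-variation parts from \eref{e:defY}, bound the martingale term by It\^o isometry together with $\E_y\tau^n=\CO(n^2)$ (obtained by a supersolution construction patterned on Lemma~\ref{lem:boundexit}), and handle the drift term through the occupation time of the interface and an excursion decomposition. However, there is a genuine gap in the drift term. What is needed is a bound on the \emph{second} moment $\E_y\bigl[\bigl(\int_0^{\tau^n}\one_{\I_\eta}(X_s)\,ds\bigr)^2\bigr]=\CO(n^2)$, but your excursion argument only establishes the \emph{first} moment $\E_y\int_0^{\tau^n}\one_{\I_\eta}(X_s)\,ds=\CO(n)$. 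The step ``and hence the drift contribution to $\E_y\|Y(\tau^n)-y\|^2$ is $\CO(n^2)$'' runs Jensen's inequality backwards: from $\E[T]=\CO(n)$ one only obtains the \emph{lower} bound $\E[T^2]\ge(\E[T])^2$, not an upper bound. The parenthetical ``in fact $\CO(n)$, which is even better'' is false for the same reason: since the expected number of excursions really is of order $n$, one has $\E[T]\ge cn$ and hence $\E[T^2]\ge c^2n^2$; the drift contribution is genuinely of order $n^2$, not $n$.

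The paper closes exactly this gap: it writes the occupation time as a random sum $\sum_{k=0}^N(\sigma_k-\phi_k)$ over excursions and applies Cauchy--Schwarz to the sum, $\E_y\bigl[\bigl(\sum_{k=0}^N(\sigma_k-\phi_k)\bigr)^2\bigr]\le\bigl(\E_y N^3\sum_{k\ge 0}\E_y[(\sigma_k-\phi_k)^4\,|\,N\ge k]\,\P_y(N\ge k)\bigr)^{1/2}$, then uses that each $\sigma_k-\phi_k$ is an exit time from a compact slab for an elliptic diffusion and hence has uniformly bounded fourth moment, together with the geometric tail $\P_y(N>k)\le(1-c/n)^k$ from Lemma~\ref{lemgam}, giving $\E_y N^3=\CO(n^3)$ and $\sum_k\P_y(N\ge k)=\CO(n)$; the square root of the product is $\CO(n^2)$. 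Your excursion set-up contains all the required ingredients, but you must actually run a second-moment estimate on the random sum rather than squaring a first-moment bound. Your remarks about gluing the Lyapunov supersolution for $\E_y\tau^n$ across the interface and resolving this via the strong Markov property at $\I_\eta$ are sensible and consistent with what the paper leaves implicit.
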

\begin{pf}
It follows from (\ref{e:defY}) that
%
%
\begin{equation}\label{e:boundE2}\qquad
\E_y \|Y(\tau^n) - y\|^2
\le2 \E_y \biggl\|\int_0^{\tau^n} \tilde b(X_s) \,ds \biggr\|^2 +
2 \E_y \biggl\|\int_0^{\tau^n} \tilde\sigma(X_s) \,dW(s) \biggr\|^2 .
\end{equation}
It follows from It\^o's isometry that the second term is bounded by $C
\E\tau^n$, which
in turn is bounded by $\CO(n^2)$ by a calculation virtually identical
to that of Lemma \ref{lem:boundexit}.

It remains to bound the first term, which we will do with the help of a
decomposition similar to
that used in the proof of Proposition \ref{probabilityconverge}. For
two constants $c > 0$ and $a>0$ to be
determined, we
set $\phi_0 = 0$, $\sigma_n = \inf\{t \ge\phi_{n} \dvtx |X_1(t)| =
c+a\}$ and
$\phi_n = \inf\{t \ge\sigma_{n-1} \dvtx|X_1(t)| = c\}$.
Define furthermore
\[
N = \inf\{k \ge0 \dvtx\sigma_k \ge\tau^n\} .
\]
Since $\tilde b$ is supported in a bounded strip around $\I_0$, we can
make $c$ sufficiently large so that
the first term in (\ref{e:boundE2}) is bounded by some multiple of
\begin{eqnarray*}
\E_y \Biggl(\sum_{k=0}^N (\sigma_k - \phi_k) \Biggr)^2
&\le&\sqrt{\E_y N^3 \E_y \sum_{k=0}^N (\sigma_k - \phi_k)^4}\\
&\le&\sqrt{\E_y N^3 \sum_{k=0}^\infty\E_y\bigl((\sigma_k - \phi
_k)^4 | N \ge k\bigr) \PP_y(N \ge k)} .
\end{eqnarray*}
Note now that since $\sigma_k$ is the exit time from a compact region
for an elliptic diffusion, there exists a
constant $C$ such that $\E_y((\sigma_k - \phi_k)^4 | N
\ge k) \le C$, uniformly in~$y$.
Furthermore, it follows from Lemma \ref{lemgam} that if $a$ is
sufficiently large, then
\[
\PP_y (N > 1) \le1 - {c \over n} ,
\]
for some constant $c>0$, uniformly in $y$. The strong Markov property
then immediately implies that
$\PP_y (N > k) \le(1 - {c \over n})^k$, so that $N$ is
stochastically bounded by a Poisson random variable with parameter $\CO
(n)$ and the claim follows.
\end{pf}

\section{Well-posedness of the martingale problem and characterization
of the limiting process}
\label{sec:wellposed}

The aim of this section is to show that the martingale problem
associated to the operator $\bar\CL$
as defined in Theorem \ref{maintheorem} is unique and to characterize
the corresponding
(strong) Markov process.
Our main tool is the following general result by Ethier and Kurtz \cite
{MR838085}, Theorem 4.1.
\begin{theorem} \label{333222}
Let $E$ be a separable metric space, and let $A\dvtx\mathscr{D}(A)
\to\CB_b(E)$ be linear and dissipative. Suppose there exists $\lambda
> 0$ such that
%
%
\begin{equation}\label{e:denserange}
\CC\stackrel{\mathit{def}}{=}\overline{\mathscr{R}(\lambda-A)}
=\overline{ \mathscr
{D}(A)} ,
\end{equation}
and such that $\CC$ is separating.
Let $\mu\in\mathscr{P}(E)$ and suppose $X$ is a solution of the
martingale problem for $(A,\mu)$. Then $X$ is a Markov
process corresponding to the semigroup on $\CC$ generated by the
closure of $A$, and uniqueness holds
for the martingale problem for $(A,\mu)$.
\end{theorem}

See also \cite{MR917679} for a more general result on the
well-posedness of a martingale problem with discontinuous coefficients.
This allows us to finally give the
proof of Theorem \ref{maintheorem}.
\begin{pf*}{Proof of Theorem \ref{maintheorem}}
Since we already know from the results in the previous two sections
that limit points of
$X^\eps$ solve the martingale problem associated to $\bar\CL$, it
suffices to show that this
martingale problem is well-posed and that its solutions are of the form
(\ref{e:limit}).

For this, we somehow take the reverse approach: first, we construct a
solution to (\ref{e:limit})
and we show that this is a Markov process solving the martingale
problem associated to $\bar\CL$.
We then show that this Markov process generates a strongly continuous
semigroup on $\CC_0(\R^d)$,
whose generator is the closure of $\bar\CL$ in $\CC_0$. Since $\CC
_0$ is separating and since generators of
strongly continuous semigroups are dissipative and satisfy (\ref
{e:denserange}) by the Hille--Yosida theorem,
the claim then follows.

In order to construct a solution to (\ref{e:limit}), let $M_\pm$ be
matrices satisfying $M_\pm M_\pm^T = D^\pm$
and such that
\[
M_\pm=\pmatrix{\sqrt{D^\pm_{11}} & 0 \cr v_\pm& \tilde M_\pm},
\]
for some vectors $v_\pm\in\R^{d-1}$ and some $(d-1)\times(d-1)$
matrices $\tilde M_\pm$. (This is always
possible by the QR decomposition.) We then first construct a Wiener
process $W_1$ and a process $\bar X_1$
such that
\[
d \bar X_1 = \bigl(\one_{\bar X_1 \le0} \sqrt{D_{11}^-} + \one
_{\bar X_1 > 0} \sqrt{D_{11}^+}\bigr) \,dW(t)
+ (p_+ - p_-) \,dL(t) ,
\]
where $L$ is the symmetric local time of $\bar X_1$ at the origin. This
can be achieved, for example, by
setting $\bar X_1 = g(Z)$, where
\[
g(x) =
\cases{
\sqrt{D_{11}^+}, &\quad if $x>0$,\cr
\sqrt{D_{11}^-}, &\quad otherwise,}
\]
$Z$ is a skew-Brownian motion with parameter
\[
p = {p_+ \sqrt{D_{11}^-}\over p_+ \sqrt{D_{11}^-} + p_- \sqrt
{D_{11}^+}} ,
\]
and $W$ is the martingale part of $Z$. Given such a pair $(\bar X_1,
W)$, we then
let $\tilde W$ be an independent $d-1$-dimensional Wiener process and
we define pathwise
the $\R^{d-1}$-valued process $\tilde X$ by
\begin{eqnarray*}
\tilde X(t) &=& \int_0^t (\one_{\bar X_1 \le0} \tilde M_- +
\one_{\bar X_1 > 0} \tilde M_+) \,d\tilde W(t)
+ \int_0^t (\one_{\bar X_1 \le0} v_- + \one_{\bar X_1 > 0}
v_+) \,dW(t)\\
&&{} + \tilde\alpha\int_0^t dL(t) ,
\end{eqnarray*}
where $\tilde\alpha_j = \alpha_{j+1}$. Since we know that
skew-Brownian motion enjoys
the Markov property, it follows
immediately that $\bar X_1$ is Markov, so that $\bar X = (\bar X_1,
\tilde X)$ is also a Markov process.
Applying the symmetric It\^o--Tanaka formula to $f(\bar X)$ it is
furthermore a straightforward exercise to check that
$\bar X$ does indeed solve the martingale problem for $\bar\CL$.

The corresponding Markov semigroup $\{\CP_t\}_{t\ge0}$ maps $\CC
_0(\R^d)$ into itself as a consequence
of the Feller property of skew-Brownian motion \cite{MR2280299}.
Furthermore, as a consequence of the
uniform stochastic
continuity of $\bar X$, it is strongly continuous, so that its
generator must be an extension of $\bar\CL$.
Since the range of $\bar\CL$ contains $\CC_0^\infty(\R^d)$, which
is a dense subspace of
$\CC_0(\R^d)$, the claim follows.
\end{pf*}

\section*{Acknowledgments}

The authors are grateful to Ruth Williams and Tom Kurtz for pointing
them to several articles related to this problem, as well as to the
referee for suggesting several improvements to the article. 

%

%
\printaddresses

\end{document}